\newcommand{\B}{{\mathbb B}} 
\newcommand{\R}{{\mathbb{R}}}
\newcommand{\Z}{{\mathbb{Z}}}
\newcommand{\nx}{n_{\mathrm{x}}}
\newcommand{\nU}{n_{\mathrm{u}}}
\newcommand{\nc}{n_{\mathrm{c}}}
\newcommand{\lxor}{\veebar}
\newcommand{\I}{\mathcal{I}}
\newcommand{\blo}{\ubar{c}}
\newcommand{\bup}{\bar{c}}
\newcommand{\Ts}{T_{\mathrm{s}}}
\newcommand{\ubar}{\underline}
\renewcommand{\bar}{\overline}
\newcommand{\nobs}{n_{\mathrm{obs}}}
\newcommand{\pX}{p_{\mathrm{X}}}
\newcommand{\pY}{p_{\mathrm{Y}}}
\newcommand{\vy}{v_{\mathrm{y}}}
\newcommand{\ps}{p_{\mathrm{s}}}
\newcommand{\pn}{p_{\mathrm{n}}}
\newcommand{\vs}{v_{\mathrm{s}}}
\newcommand{\vn}{v_{\mathrm{n}}}
\newcommand{\as}{a_{\mathrm{s}}}
\newcommand{\bps}{\bar{p}_{\mathrm{s}}}
\newcommand{\ups}{\ubar{p}_{\mathrm{s}}}
\newcommand{\bpn}{\bar{p}_{\mathrm{n}}}
\newcommand{\upn}{\ubar{p}_{\mathrm{n}}}
\newcommand{\alpham}{\alpha_{\mathrm{R}}^{\mathrm{max}}}
\newcommand{\alphar}{\alpha_{\mathrm{R}}}
\newcommand{\scol}{\nu^{\mathrm{c}}}
\newcommand{\scolx}{\nu_{\mathrm{s}}^{\mathrm{c}}}
\newcommand{\scoly}{\nu_{\mathrm{n}}^{\mathrm{c}}}
\newcommand{\pref}{p_{\mathrm{n}}^{\mathrm{ref}}}
\newcommand{\bpref}{\bar{p}_{\mathrm{n}}^{\mathrm{ref}}}
\newcommand{\bpsref}{\bar{p}_{\mathrm{s}}^{\mathrm{ref}}}
\newcommand{\tc}{t_{\mathrm{c}}}
\newcommand{\lc}{\delta_{\mathrm{c}}}
\newcommand{\deltac}{\delta_{\mathrm{c}}}
\newcommand{\lu}{\delta_{\mathrm{c}}^{\mathrm{u}}}
\newcommand{\ld}{\delta_{\mathrm{c}}^{\mathrm{d}}}
\newcommand{\wl}{w_{\mathrm{l}}}
\newcommand{\nl}{n_{\mathrm{l}}}
\newcommand{\pc}{\Delta_{\mathrm{c}}}
\newcommand{\M}{M}
\newcommand{\nz}{n_{\mathrm{z}}}
\newcommand{\flag}{\texttt{infeasible}}
\newcommand{\z}{\boldsymbol{z}}
\newcommand{\bin}{\delta}
\renewcommand{\P}{\mathcal{P}}
\newcommand{\fixed}{\mathcal{R}}
\newcommand{\yout}{\boldsymbol{y}}
\algrenewcommand\algorithmicrequire{\textbf{Input:}}
\algrenewcommand\algorithmicensure{\textbf{Output:}}
\algnewcommand{\IfThenElse}[3]{%
	\State \algorithmicif\ #1\ \algorithmicthen\ #2\ \algorithmicelse\ #3}
\algnewcommand{\IfThen}[2]{%
	\State \algorithmicif\ #1\ \algorithmicthen\ #2}
\newcommand{\software}[1]{\texttt{#1}}
\newcommand{\case}[1]{\raisebox{.5pt}{\textcircled{\raisebox{-.9pt} {#1}}}}
\newtheorem{Theorem}{Theorem}
\newtheorem{Remark}[Theorem]{Remark}
\newtheorem{Assumption}[Theorem]{Assumption}
\newtheorem{Definition}[Theorem]{Definition}
\newtheorem{Proposition}[Theorem]{Proposition}
\definecolor{wheat}{rgb}{0.96,0.87,0.70}
\newcommand{\change}[1]{#1}
\def\BibTeX{{\rm B\kern-.05em{\sc i\kern-.025em b}\kern-.08em
    T\kern-.1667em\lower.7ex\hbox{E}\kern-.125emX}}
\begin{document}
\title{Real-time Mixed-Integer Quadratic Programming for Vehicle Decision Making and Motion Planning}
\author{Rien Quirynen, Sleiman Safaoui, \IEEEmembership{Member, IEEE}, and Stefano Di Cairano, \IEEEmembership{Senior Member, IEEE}
\thanks{R. Quirynen and S. Di Cairano are with 
Mitsubishi Electric Research Laboratories, Cambridge, MA, USA (e-mail: \{quirynen,dicairano\}@merl.com).}
\thanks{Sleiman Safaoui is with the Department of Electrical Engineering, University
of Texas at Dallas, Richardson, TX, USA (e-mail: sleiman.safaoui@utdallas.edu). This work was done during his internship at Mitsubishi Electric Research Laboratories.}}

\maketitle

\begin{abstract}
We develop a real-time feasible mixed-integer programming-based decision making~(MIP-DM) system for automated driving. Using a linear vehicle model in a road-aligned coordinate frame, the lane change constraints, collision avoidance and traffic rules can be formulated as mixed-integer inequalities, resulting in a mixed-integer quadratic program~(MIQP). The proposed MIP-DM simultaneously performs \change{maneuver selection and trajectory generation} by solving the MIQP at each sampling time instant. \change{While solving MIQPs in real time has been considered intractable in the past, we show that our recently developed solver \software{BB-ASIPM} is capable of solving MIP-DM problems on embedded hardware in real time.} The performance of this approach is illustrated in simulations in various scenarios including merging points and traffic intersections\change{, and hardware-in-the-loop simulations on dSPACE Scalexio and MicroAutoBox-III}. 
Finally, we present results from hardware experiments 
on small-scale automated vehicles.
\end{abstract}

\begin{IEEEkeywords}
Autonomous driving, Decision making, 
Mixed integer programming, Motion planning, Predictive control
\end{IEEEkeywords}

\section{Introduction}
\label{sec:introduction}

\IEEEPARstart{A}{utomated} transportation systems, even in the case of partial automation, may lead to reduced road accidents and more efficient usage of the road network.
\change{However, the} complexity of automated driving~(AD) and advanced driver-assistance systems~(ADAS) and their real-time requirements in resource-limited automotive platforms~\cite{DiCairano2018tutorial} requires the implementation of a multi-layer guidance and control architecture~\cite{Paden2016,Guanetti2018}. Thus, the ADAS/AD system consists of multiple interconnected components, including communication and sensor interfaces connecting each block and potentially executing at different sampling rates, \change{aiming} for the integrated system to satisfy the driving specifications~\cite{Cairano2016,SahinQuirynenEtAl2020}. 

A typical guidance and control architecture is illustrated in Figure~\ref{fig:MIP_DM_architecture1}, e.g., similar to~\cite{Berntorp2018,Ahn2021}.
Based on a route given by a navigation system, a decision making module decides when to perform maneuvers \change{such as} lane changing, stopping, waiting, and intersection crossing. Given these decisions, a motion planning system generates a state trajectory to execute the maneuvers, and a vehicle control system computes the input signals to track the trajectory.

Optimization-based motion planning and control techniques, such as model predictive control~(MPC), \change{directly account for dynamics, constraints and objectives in a model-based design framework~\cite{Mayne2013}.}
This has been extended to hybrid systems~\cite{bemporad1999control}, including both discrete and continuous decision variables. The resulting hybrid MPC can tackle a large range of problems, including switched dynamical systems~\cite{MarcucciTedrake2019},
 motion planning with obstacle avoidance~\cite{LandryDeitsEtAl2016}, logic rules and temporal logic specifications~\cite{SahinQuirynenEtAl2020}.
However, the \change{mixed-integer optimal control problem~(MIOCP) to be solved at each step is} non-convex due to integer variables, and $\mathcal{NP}$-hard~\cite{Pia2017}. For a linear-quadratic objective, linear or piecewise-linear dynamics and inequality constraints, the \change{MIOCP results in} a mixed-integer quadratic program~(MIQP).

Recent work~\cite{Quirynen2022} indicates that, by exploiting the particular structure of \change{the MIOCPs}, real-time solvers can achieve performance comparable to commercial tools, e.g., \software{GUROBI}~\cite{gurobi} and \software{MOSEK}~\cite{mosek}, especially for small to medium-scale problems. Therefore, we use the tailored \software{BB-ASIPM} solver~\cite{Quirynen2022}, using a branch-and-bound~(B\&B) method with reliability branching and warm starting~\cite{Hespanhol2019}, block-sparse presolve techniques~\cite{Quirynen2022}, early termination and infeasibility detection~\cite{Liang2021} within a fast convex quadratic programming~(QP) solver based on an active-set interior point method~(\software{ASIPM})~\cite{ASIPM}.

In this paper, we design a mixed-integer programming decision making~(MIP-DM) module \change{for vehicles} that simultaneously computes a sequence of discrete decisions and a continuous motion trajectory \change{in a hybrid MPC framework}. This approach eliminates the need for a \change{separate} motion planner \change{in the ADAS/AD architecture} as long as an advanced vehicle control algorithm is used, e.g., based on nonlinear MPC~(NMPC), see Figure~\ref{fig:MIP_DM_architecture2}. We demonstrate the proposed MIP-DM approach in simulations in various scenarios including merging points and traffic intersections, and we confirm its real-time feasibility on dSPACE Scalexio and MicroAutoBox-III rapid prototyping units \change{commonly used in automotive development}. Finally, we present results from hardware experiments using MIP-DM in combination with NMPC-based reference tracking on a setup with small-scale automated vehicles.

\subsection{Relation with Existing Literature}
In the DARPA Urban Challenge~\cite{DARPA2009}, most teams implemented rule-based decision making systems involving hand-tuned heuristics for different urban-driving scenarios.
Some recent works on vehicle decision making are based on machine learning, e.g., supervised or reinforcement learning~\cite{schwarting2018planning,Liu2021}, \change{which lacks guarantees}. 
The work in~\cite{Ahn2021} proposes the use of automata combined with set reachability, \change{however it does not account for performance, but only for maneuver feasibility}.
The work in~\cite{Esterle2018} proposes a method for simultaneous trajectory generation and maneuver selection, but the complexity of the approach grows rapidly with the number of obstacles.

Our prior work~\cite{SahinQuirynenEtAl2020} proposed to define traffic rules as signal temporal logic~(STL) formulae that are converted into a set of mixed-integer inequalities for vehicle decision making based on the solution of MIQPs. \change{This results in formal guarantees but using an excessively large optimization problem for real-time implementation, in part due to the automated STL formulae translation.} Motivated by the latter results, the present paper proposes a real-time feasible MIQP formulation for vehicle decision making and motion planning. An overview on MIP-based decision making, motion planning and control problems may be found in~\cite{Richards2005,Ioan2021}. Specifically for ADAS/AD systems, the works in~\cite{Ballesteros2017,Miller2018} propose MIPs for vehicle lane changing and overtaking maneuvers. To the best of our knowledge, this paper presents the first MIP for decision making with an embedded solver that is demonstrated to be real-time feasible in \change{automotive} hardware-in-the-loop~(HIL) simulations and in small-scale vehicle experiments.

\subsection{Contributions of Present Work}
A first contribution of the present paper is a detailed description of an MIQP formulation for vehicle decision making that can handle a wide range of traffic scenarios, while operating in a dynamic environment with potentially changing traffic rules. Second, we present the tailored \software{BB-ASIPM} solver and illustrate its computational performance to implement the proposed MIP-DM method, comparing against state-of-the-art software tools based on simulation results in various scenarios including merging points and traffic intersections. Third, we illustrate real-time feasibility of the approach on dSPACE Scalexio and MicroAutoBox-III rapid prototyping units. A fourth contribution includes the results from hardware experiments based on MIP-DM in combination with NMPC-based reference tracking using small-scale automated vehicles.

\subsection{Outline and Notation}
This paper is structured as follows. Section~\ref{sec:problem} introduces the objectives and problem formulation, followed by a detailed description of the MIP-DM method in Section~\ref{sec:MIQP_form}. The embedded MIQP solver is described in Section~\ref{sec:MIMPC_solver}, and the simulation results are shown in Section~\ref{sec:simulation}. Finally, Section~\ref{sec:results} presents results from the hardware experiments and our conclusions are established in Section~\ref{sec:concl}.

{\em Notation}: 
$\R$, $\R_+$, $\R_{0+}$ ($\Z$, $\Z_+$, $\Z_{0+}$) are the set of real, positive real and nonnegative real (integer) numbers, $\B=\{0,1\}$, and $\Z_a^b = \{a, a+1, \ldots, b-1, b\}$. The logical operators {\em and}, {\em or}, {\em xor}, {\em not} are $\land$, $\lor$, $\lxor$, $\lnot$, and the logical operators {\em implies} and {\em equivalent (if and only if)} are $\implies$, $\iff$. Inequalities between vectors are intended componentwise.

\section{Problem Setup and Formulation}
\label{sec:problem}

\begin{figure}
	\centering
	\begin{minipage}{.52\linewidth}%
		\begin{subfigure}[b]{\textwidth}
			\centering
			\includegraphics[width=0.83\textwidth,trim={0cm 0cm 0cm 0cm},clip]{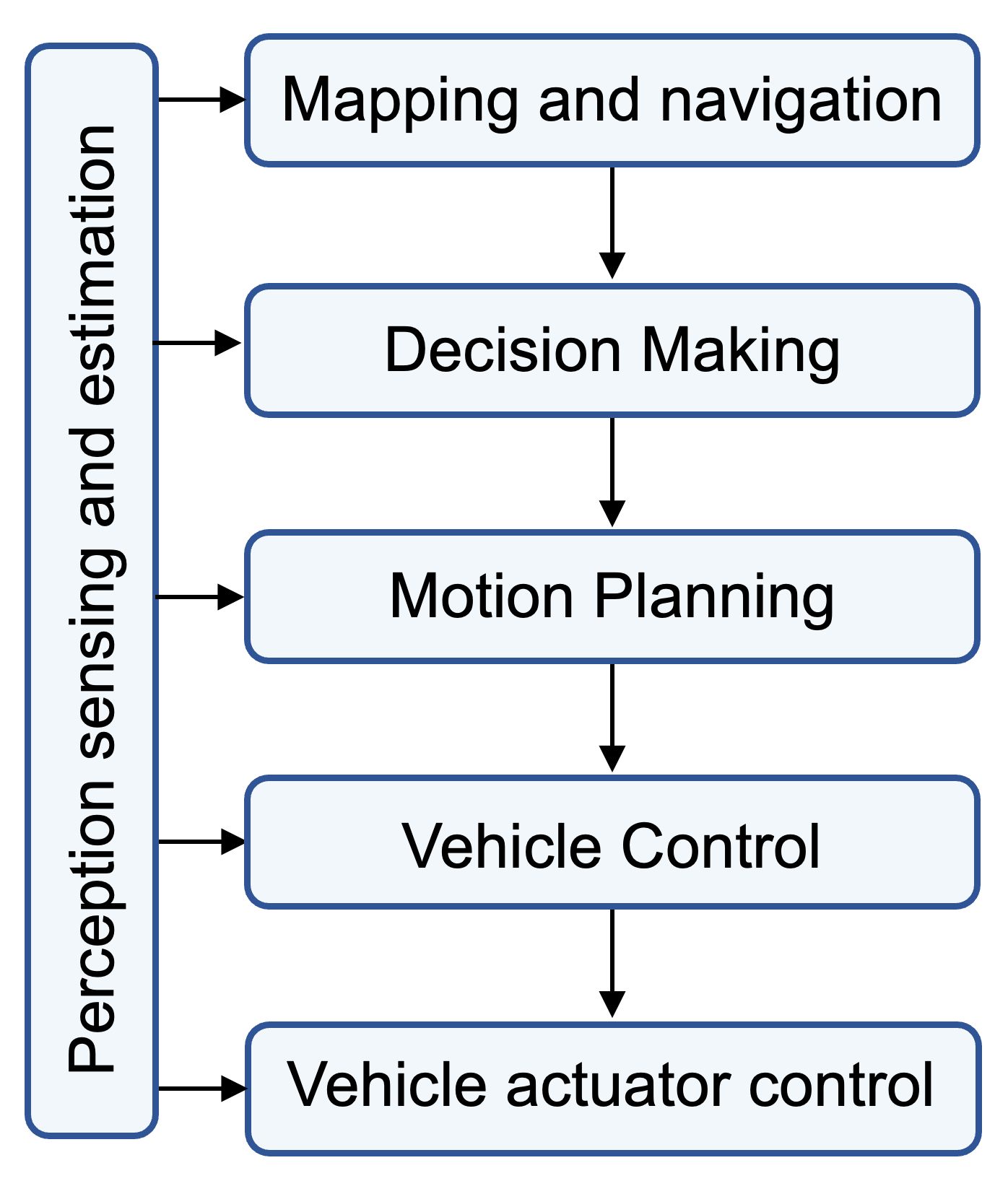}
			\captionsetup{font=normalsize,labelfont={sf}}
			\caption[]%
			{\normalsize Typical architecture, e.g.~\cite{Ahn2021}.}
			\label{fig:MIP_DM_architecture1}
		\end{subfigure}
	\end{minipage}
	\begin{minipage}{.46\linewidth}%
		\begin{subfigure}[b]{\textwidth}  
			\centering
			\includegraphics[width=0.93\textwidth,trim={0cm 0cm 0cm 0cm},clip]{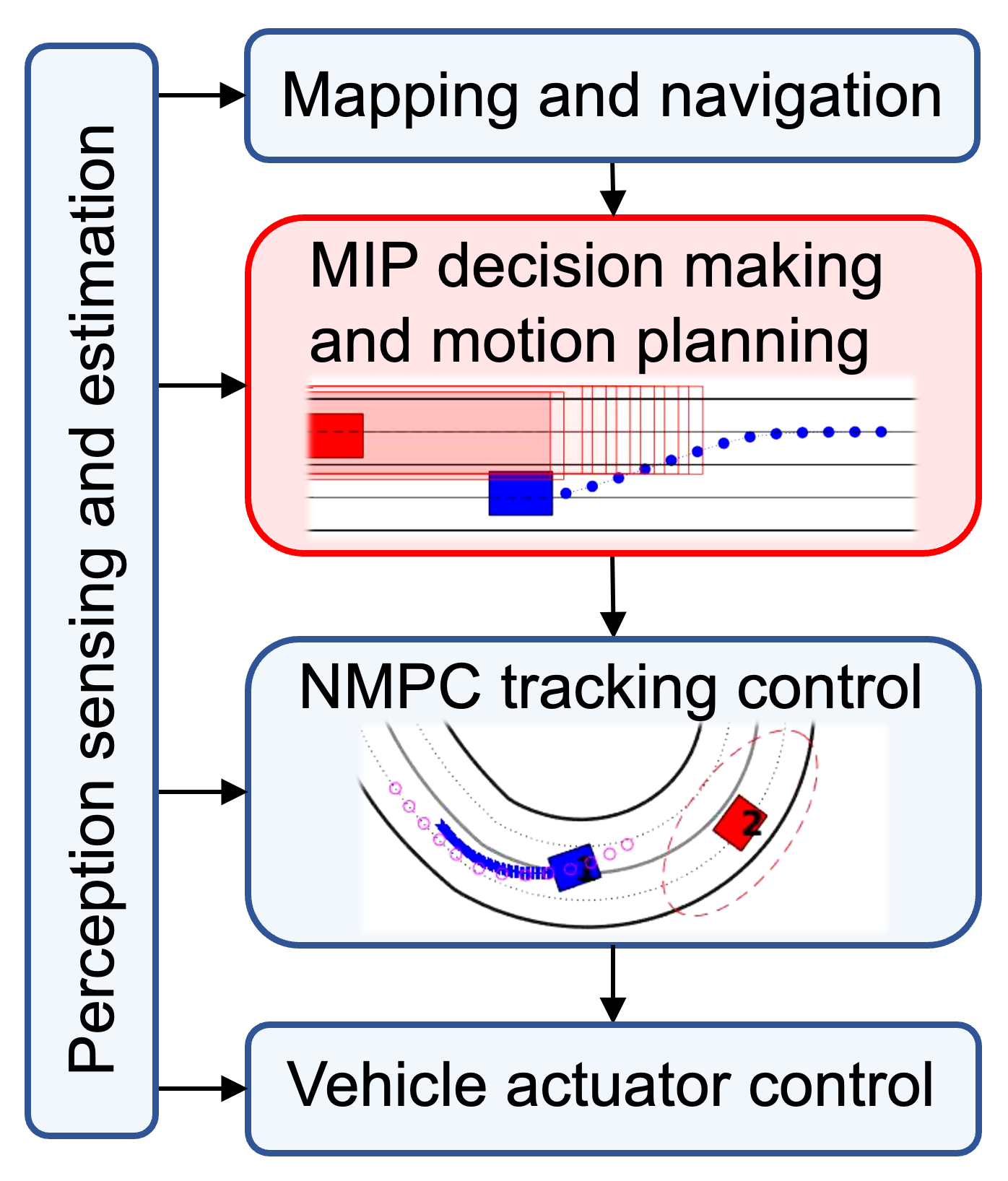}
			\captionsetup{font=normalsize,labelfont={sf}}
			\caption[]%
			{\normalsize MIP-DM architecture.}
			\label{fig:MIP_DM_architecture2}
		\end{subfigure}
	\end{minipage}
	\caption[ ]{Multi-layer control architecture for ADAS/AD.}
	\label{fig:MIP_DM_architecture}
\end{figure}

This section briefly describes common components in a multi-layer guidance and control architecture for ADAS/AD, \change{and then introduces the MIOCP formulation for MIP-DM}.

\subsection{Multi-layer Control Architecture for Automated Driving}

A typical guidance and control architecture is illustrated in Figure~\ref{fig:MIP_DM_architecture1}. 
A perception, sensing and estimation module uses various on-board sensor information, such as radar, LIDAR, camera, and global positioning system~(GPS) information, to estimate the vehicle states, parameters, and parts of the surroundings relevant to the driving scenario~\cite{Brummelen2018}.
Based on a route given by a navigation system, a decision making module \change{determines what maneuvers to perform, e.g.,} lane changing, stopping, waiting, intersection crossing~\cite{Ahn2021}. Then, a motion planning system generates a collision-free and kinematically feasible trajectory to perform the maneuvers, see, e.g.,~\cite{Berntorp2018c}. A vehicle control system computes the input signals to execute the motion planning trajectory, see, e.g.,~\cite{Quirynen2020}. Additional low-level controllers \change{operate the vehicle actuators}.

\subsection{Setup for MIP-based Decision Making~(MIP-DM)}

In this paper, an autonomous vehicle must reach a desired destination while obeying the traffic rules. This requires the vehicle to adjust its velocity to obey the speed limits, to avoid collisions, to follow and change lanes, and to cross intersections following right of way rules. We propose an alternative architecture \change{to that in Fig.~\ref{fig:MIP_DM_architecture1}}, using MIP-based vehicle decision making, see Fig.~\ref{fig:MIP_DM_architecture2}. The problem setup in this work requires the following simplifying assumptions.
\begin{Assumption}
	\change{There exists a prediction time window along which the following are known}
	\begin{enumerate}
		\item the position and orientation for each of the obstacles in a \change{sufficiently large} neighborhood of the ego vehicle,
		\item the map information, including center lines, road curvature and lane widths within the current road segment,
		\item the current traffic rules and any changes to the rules, e.g., traffic light timings and/or speed zone changes. \hfill\QEDopen
\end{enumerate}
\label{as:problem}
\end{Assumption}

Assumption~\ref{as:problem}.1 requires the vehicle to be equipped with sensors to detect static and dynamic obstacles within a given range and to locate itself in the environment. Furthermore, the vehicle must be equipped with a module that provides conservative predictions for future trajectories of the dynamic obstacles, 
e.g., using techniques referenced in~\cite{Paden2016,Guanetti2018}.
Assumption~\ref{as:problem}.2 requires the availability of map information and/or the use of online updates and corrections to such map information~\cite{Brummelen2018}. Assumption~\ref{as:problem}.3 requires a combination of map information, online perception~\cite{Brummelen2018}, and/or vehicle-to-infrastructure~(V2I) communication~\cite{ersal2020}. Based on these assumptions, we define the problem statement and objectives.

\begin{Definition}[MIP Decision Making~(MIP-DM)]\label{def:DMandMP} Under Assumption~\ref{as:problem} and given navigation information, at each sampling instant, \change{the MIP-DM} module solves an MIOCP on embedded hardware and under strict timing requirements. The solution provides desired maneuvers that the vehicle should execute, and a coarse trajectory, i.e., a sequence of waypoints and target velocities, over a horizon of several seconds for the vehicle control module to execute the \change{maneuver}. \hfill\QEDopen
\end{Definition}

\change{Based on Def.~\ref{def:DMandMP},} the trajectory computed by the MIP-DM is executed by a vehicle control module, e.g., the NMPC reference tracking controller in Fig.~\ref{fig:MIP_DM_architecture2}.

\subsection{\change{Mixed-integer Optimal Control Problem~(MIOCP)}}

At each sampling time instant, the proposed MIP-DM solves the following MIOCP
\begin{subequations} \label{eq:MIOCP}%
	\begin{alignat}{5}
		\hspace{-3mm}\underset{X,\,U}{\text{min}} \quad &\sum_{i=0}^{N} \frac{1}{2}\begin{bmatrix}	x(i) \\ u(i)	\end{bmatrix}^\top \!H(i) \begin{bmatrix}	x(i) \\ u(i)	\end{bmatrix} + \begin{bmatrix}	q(i) \\ r(i)	\end{bmatrix}^\top \begin{bmatrix}	x(i) \\ u(i)	\end{bmatrix} && \label{OCP:obj}\\
		\hspace{-3mm}\text{s.t.} \quad\; 
		& x(i+1) = \begin{bmatrix} A(i) &\hspace{-1mm} B(i) \end{bmatrix} \begin{bmatrix} x(i) \\ u(i) \end{bmatrix} + a(i), && \hspace{-2em} \forall i \in \Z_{0}^{N-1}, \label{OCP:dyn}\\
		& \begin{bmatrix} \ubar{x}(i) \\ \ubar{u}(i) \end{bmatrix} \leq \begin{bmatrix} x(i) \\ u(i) \end{bmatrix} \leq \begin{bmatrix} \bar{x}(i) \\ \bar{u}(i) \end{bmatrix}, \quad &&\hspace{-2em} \forall i \in \Z_{0}^{N}, \label{OCP:bounds} \\
		& \blo(i) \leq \begin{bmatrix} C(i) &\hspace{-1mm} D(i) \end{bmatrix} \begin{bmatrix} x(i) \\ u(i) \end{bmatrix} \leq \bup(i), \quad &&\hspace{-2em} \forall i \in \Z_{0}^{N}, \label{OCP:ineq} \\
		& \change{u_j(i)} \in \Z, \quad \forall j \in \I(i), && \hspace{-2em} \forall i \in \Z_{0}^{N}, \label{OCP:int}
	\end{alignat}
\end{subequations} 
where \change{$i \in \{0,1,\ldots,N\}$ is the time, $N$ is the horizon length,} the state variables are $x(i) \in \R^{\nx^i}$, the control \change{and auxiliary} variables are $u(i) \in \R^{\nU^i}$ and $\I(i)$ denotes the index set of integer decision variables, i.e., the cardinality $| \I(i) | \le \nU^i$ denotes the number of integer variables at each time step.
The objective in~\eqref{OCP:obj} defines a linear-quadratic function with positive semi-definite Hessian matrix $H(i) \succeq 0$ and gradient vectors $q(i) \in \R^{\nx^i}$ and $r(i) \in \R^{\nU^i}$. The constraints include dynamic constraints in~\eqref{OCP:dyn}, simple bounds in~\eqref{OCP:bounds}, affine inequality constraints in~\eqref{OCP:ineq} and integer feasibility constraints in~\eqref{OCP:int}. The initial state constraint \change{$x(0) = \hat{x}_t$}, where \change{$\hat{x}_t$} is a current state estimate \change{at time $t$}, can be enforced using the simple bounds in~\eqref{OCP:bounds}. The MIOCP~\eqref{eq:MIOCP} includes control variables on the terminal stage, $u(N) \in \R^{\nU^N}$, \change{due to possibly needing} auxiliary variables to formulate the mixed-integer inequality constraints. A binary optimization variable $\change{u_j(i)} \in \{0,1\}$ can be defined as an integer variable $\change{u_j(i)} \in \Z$ in~\eqref{OCP:int}, including the simple bounds $0 \le \change{u_j(i)} \le 1$ in~\eqref{OCP:bounds}. 
\change{For compactness, we denote $X=[x(0)^\top,\ldots, x(N)^\top]^\top$ and $U=[u(0)^\top,\ldots, u(N)^\top]^\top$.}
\change{The MIOCP~\eqref{eq:MIOCP} can be reformulated as a block-sparse structured MIQP~\cite{Quirynen2022}, and solved with corresponding algorithms.}

\section{Mixed-Integer Quadratic Programming for Vehicle Decision Making and Motion Planning}
\label{sec:MIQP_form}

\change{Next, we describe the MIP-DM for achieving safe and real-time feasible automated driving in real-world scenarios.}

\subsection{Linear Vehicle Model in Road-aligned Frame}
\label{sec:model}

The curvilinear coordinate system used in the prediction model of the MIOCP~\eqref{eq:MIOCP} is shown in Fig.~\ref{fig:curvlinear}. A similar coordinate system has been used for predictive control, e.g.,~\cite{Frasch2013b,Gao2012SpatialPC}. The vehicle position is described by $(\ps,\pn)$, where $\ps$ denotes the progress along the center line of the lane in which the ego vehicle is driving, and $\pn$ denotes the normal distance of the vehicle position from the center line. 

\begin{figure}[h]
	\centerline{\hbox{
			\includegraphics[width=0.38\textwidth,trim={1cm 0cm 1cm 1cm},clip]{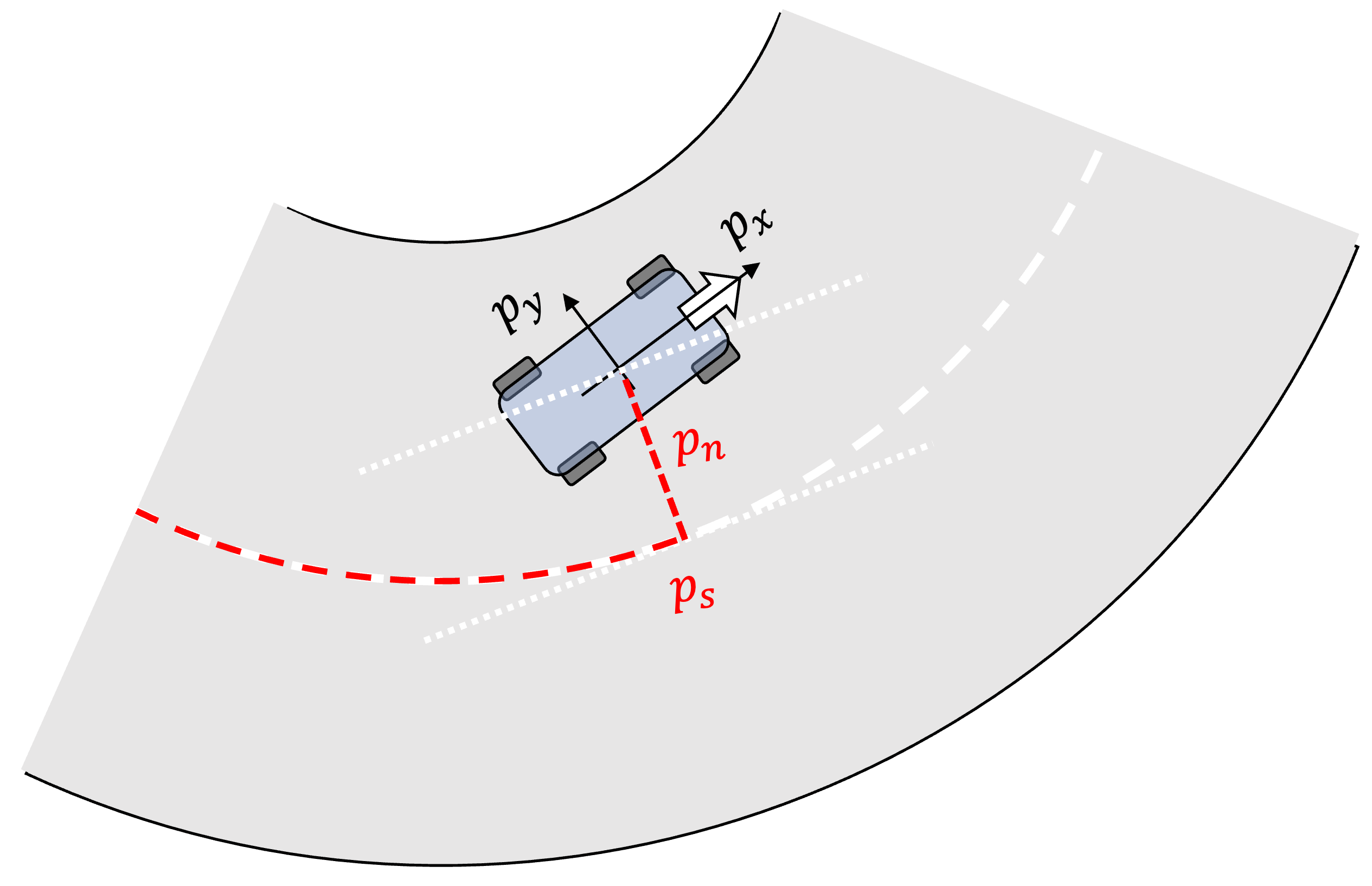}}}		
	\caption{Road-aligned curvilinear coordinate system for a curved segment; $\ps$ is the arc length along the center line and $\pn$ is the lateral deviation.}
	\label{fig:curvlinear}
\end{figure}

\begin{Assumption} \label{as:modeling}
The turning radius is much larger than the wheelbase of the vehicle, such that the steering and slip angles are relatively small and their difference for the outside and inside wheels is negligible. \hfill\QEDopen
\end{Assumption}

\change{Based on Ass.~\ref{as:modeling}, which is common in vehicle motion planning~\cite{Berntorp2018,Berntorp2018c}, we use a}
simplified linear vehicle model in the curvilinear coordinate system and with decoupled longitudinal and lateral kinematics
\begin{equation}
\begin{aligned}
\ps(i+1) &= \ps(i) + \Ts \, \vs(i), \\
\vs(i+1) &= \vs(i) + \Ts \, \as(i), \\
\pn(i+1) &= \pn(i) + \Ts \, \vn(i), \\
\end{aligned}  \label{eq:kinematics}
\end{equation}
where the control inputs are the longitudinal acceleration $\as(i)$ and the lateral velocity $\vn(i)$ at each time step $i \in \Z_{0}^{N-1}$.
To approximate the nonholonomic constraints of Ackerman steering for vehicles, we enforce the linear inequality constraint on the lateral and longitudinal velocity
\begin{equation}
-\alpha\, \vs(i) \leq \vn(i) \leq \alpha\, \vs(i), \quad i \in \Z_{0}^{N-1}, \label{eq:steering}
\end{equation}
where $\alpha > 0$, and we assume $\vs(i) \ge 0$ at all time steps. 

\begin{Proposition}
\label{prop:steering}
The inequality constraint in~\eqref{eq:steering} is a linear approximation of a vehicle steering limit and, using a kinematic bicycle model, 
\begin{equation}
\alpha = \text{sin}\left( \text{tan}^{-1}\left( \frac{l_{\mathrm{r}}}{R^{\mathrm{min}}} \right) \right) \approx \frac{l_{\mathrm{r}}}{R^{\mathrm{min}}}, \label{eq:alpha}
\end{equation}
where $l_{\mathrm{r}}$ denotes the distance from center of gravity to the rear axle and $R^{\mathrm{min}}$ denotes the vehicle's minimum turning radius.
\end{Proposition}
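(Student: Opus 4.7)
The plan is to derive both \eqref{eq:steering} and the explicit form \eqref{eq:alpha} from elementary kinematics of the single-track bicycle model, invoking Assumption~\ref{as:modeling} to justify the small-angle simplifications.

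First, I would recall the standard kinematic bicycle model at the center of gravity: with velocity magnitude $v \ge 0$, steering angle $\delta$, and distances $l_f, l_r$ from the CG to the front and rear axles, the body-side slip angle $\beta$ (the angle between the CG velocity vector and the longitudinal body axis) satisfies $\beta = \tan^{-1}\bigl(\tfrac{l_r}{l_f+l_r}\tan\delta\bigr)$. Elementary geometry of the instantaneous center of rotation then gives $\tan\beta = l_r/R$, where $R$ is the turning radius of the rear axle.

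Second, I would express the CG velocity in the road-aligned frame. Under Assumption~\ref{as:modeling}, $\beta$ is small and the heading deviation from the center-line tangent is negligible (consistent with the decoupled longitudinal/lateral kinematics in~\eqref{eq:kinematics}). Projecting the CG velocity onto the curvilinear axes of Fig.~\ref{fig:curvlinear} then yields $v_s = v\cos\beta$ and $v_n = v\sin\beta$, so that $|v_n| = v|\sin\beta|$ and $v_s \approx v$ to leading order in $\beta$.

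Third, I would translate the steering limit into a bound on $\beta$. The minimum turning radius $R^{\mathrm{min}}$ corresponds to the maximum admissible slip angle $\beta_{\max}$, and from $\tan\beta = l_r/R$ we obtain $\beta_{\max} = \tan^{-1}(l_r/R^{\mathrm{min}})$. Combining with the previous step gives $|v_n| \le v\sin\beta_{\max} \approx \sin\beta_{\max}\,v_s$, which is exactly~\eqref{eq:steering} with $\alpha = \sin\beta_{\max} = \sin\bigl(\tan^{-1}(l_r/R^{\mathrm{min}})\bigr)$. The approximation $\alpha \approx l_r/R^{\mathrm{min}}$ then follows from $\sin x \approx \tan x \approx x$ for small $x$.

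The main obstacle I anticipate is bookkeeping around which longitudinal velocity is being bounded: the exact relation $v_n = v_s\tan\beta$ (from $v_n/v_s = \tan\beta$) would give $\alpha = \tan\beta_{\max}$, whereas arriving at the stated $\alpha = \sin\beta_{\max}$ requires the same small-slip regime of Assumption~\ref{as:modeling} in which $v_s \approx v$ and $\sin\beta_{\max} \approx \tan\beta_{\max}$ agree to first order. I would therefore state explicitly that \eqref{eq:steering}--\eqref{eq:alpha} is a first-order \emph{linear} approximation of the nonlinear nonholonomic steering limit, valid in the small-slip/large-turning-radius regime, and note that the approximation is on the safe side when $\alpha$ is interpreted through $\sin\beta_{\max}$ since $\sin\beta_{\max} < \tan\beta_{\max}$.
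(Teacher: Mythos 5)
Your proposal is correct and follows essentially the same route as the paper's proof: both derive $\beta = \tan^{-1}(l_{\mathrm{r}}/R)$ from the kinematic bicycle model in steady-state cornering (the paper via matching the yaw rate $\dot{\psi}=v/R$ to get $\tan\delta\approx L/R$, you via the equivalent instantaneous-center-of-rotation geometry), identify the body-frame lateral velocity $v\sin\beta$, evaluate at $R=R^{\mathrm{min}}$, and apply the small-angle approximation to obtain $\alpha=\sin\bigl(\tan^{-1}(l_{\mathrm{r}}/R^{\mathrm{min}})\bigr)\approx l_{\mathrm{r}}/R^{\mathrm{min}}$. Your closing remark on the $\sin$ versus $\tan$ bookkeeping is a sensible clarification but does not change the argument.
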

\begin{proof}
Considering the kinematic bicycle model~\cite{Rajamani2012}
\begin{subequations}
\begin{alignat}{5}
	\dot{p}_{\mathrm{X}} &= v\, \text{cos}(\psi+\beta), &\quad \dot{p}_{\mathrm{Y}} &= v\, \text{sin}(\psi+\beta), \\
	\dot{\psi} &= v\, \frac{\text{cos}(\beta)}{L} \text{tan}(\delta), &\quad \beta &= \text{tan}^{-1} \left( \frac{l_{\mathrm{r}}\, \text{tan}(\delta)}{L} \right) ,
\end{alignat} \label{eq:nonl_kinematics}%
\end{subequations}
where $(\pX, \pY)$ is the position of the vehicle's center of gravity in an absolute frame, and $L = l_{\mathrm{f}}+l_{\mathrm{r}}$ is the wheelbase. For a constant radius $R$, or road curvature $\frac{1}{R}$, the yaw rate is $\dot{\psi} = \frac{v}{R}$~\cite[Sec.~2.2]{Rajamani2012}, such that $\text{tan}(\delta) \approx \frac{L}{R}$ and $\beta = \text{tan}^{-1} \left( \frac{l_{\mathrm{r}}}{R} \right)$. We know that the lateral velocity is $\dot{p}_{\mathrm{y}} = v\, \text{sin}(\beta)$ in the car body frame.
Given a minimum turning radius $R^{\mathrm{min}}>0$, the steady state lateral velocity is $\vy^{\mathrm{max}} = v\, \text{sin}(\text{tan}^{-1}( \frac{l_{\mathrm{r}}}{R^{\mathrm{min}}}))$, and therefore $\alpham = \text{sin}(\text{tan}^{-1}(\frac{l_{\mathrm{r}}}{R^{\mathrm{min}}})) \approx \frac{l_{\mathrm{r}}}{R^{\mathrm{min}}} >0$ in~\eqref{eq:steering}.
\end{proof}

The vehicle model~\eqref{eq:kinematics} is an approximation of more precise models, see, e.g.,~\cite{Berntorp2014a}, which are usually nonlinear. However, \change{the MIOCP~\eqref{eq:MIOCP} provides a reference trajectory for the vehicle controller and operates in normal driving conditions when some of} the vehicle nonlinearities, such as the road-tire friction curve, are not excited, while others can be neglected because the decision-making operates over long horizons with a fairly coarse sampling period. Modeling errors are compensated by the vehicle control layer as illustrated in Fig.~\ref{fig:MIP_DM_architecture}.

\begin{Remark}
\label{rem:steering}
Given a time varying road radius $R(i)$, which may be positive or negative depending on the direction of the road curvature, the lateral velocity in~\eqref{eq:kinematics} is bounded as $\vn(i) \le \vy^{\mathrm{max}} - \vy^{\mathrm{R}}(i)$, where $\vy^{\mathrm{max}} = v \, \alpham$, and $\vy^{\mathrm{R}}(i) = v \, \alphar(i)$ denotes the steady state lateral velocity to follow the center of the road with radius $R(i)$. Eq.~\eqref{eq:steering} may be replaced by 
\begin{equation}
\hspace{-5mm}	\left(-\alpham - \alphar(i)\right) \, \vs(i) \leq \vn(i) \leq \left(\alpham - \alphar(i)\right) \, \vs(i), \label{eq:steering_new}
\end{equation}
where $\alpham = \frac{l_{\mathrm{r}}}{R^{\mathrm{min}}} > 0$ defines the maximum steering and $\alphar(i) = \frac{l_{\mathrm{r}}}{R(i)}$ defines the steering needed to follow the center of the road with radius $R(i)$, following Proposition~\ref{prop:steering}. \hfill\QEDopen
\end{Remark}

\begin{Remark}
Proposition~\ref{prop:steering} uses a simple approximation of the steady-state cornering equations in~\cite[Sec.~3.3]{Rajamani2012}.
Alternatively, the cornering equations could be directly used to compute a time-varying value for $\alpha(i)$ that depends on the predicted velocity and the road curvature. \hfill\QEDopen
\end{Remark}

\subsection{Lane Change and Timing Delay Constraints}
\label{sec:laneChange}

\change{We enforce} lane bound constraints
\begin{equation}
-\frac{\wl}{2} \le \pn(i) - \pref(i) \le \frac{\wl}{2}, \quad i \in \Z_{0}^{N}, \label{eq:lane_bound}
\end{equation}
where $\wl$ denotes a lane width given by the map and $\pref \in \R$ is an auxiliary state variable that denotes the lateral position of the center line of the current lane of the vehicle. \change{For equal lane width values $\wl$}, the vehicle is in lane $j$ if $\pref = (j-1)\,\wl$ for $j \in \{1,\ldots, \nl\}$, where $\nl$ is the number of lanes in the current traffic scenario. Even though the reference lane value may jump from one time step $\pref(i)$ to the next $\pref(i+1)$, it may take multiple time steps for the lateral position to transition from the center line of one lane to the next, i.e., $\pn(i-l) \approx \pref(i)$ and $\pn(i+k) \approx \pref(i+1)$, where $l \ge 0$ and $k \ge 1$. 

\subsubsection{Lane Change Decision Constraints}
We use two binary variables $\lu(i), \ld(i) \in \{0, 1\}$ that denote whether the vehicle performs a lane change \change{left or right}, respectively, at time step $i \in \Z_{0}^{N-1}$.
We also introduce an auxiliary variable $\pc \in \R$ defined by $\lu(i)$, $\ld(i)$ through
\begin{equation}
\begin{aligned}
\lu(i) = 1 &\implies \pc(i) = \wl \;\land\; \ld(i)=0, \\
\ld(i) = 1 &\implies \pc(i) = -\wl \;\land\; \lu(i)=0, \\
\hspace{-2mm} \lu(i) = 0 \;\land\; \ld(i) = 0 &\implies \pc(i) = 0.
\end{aligned} \label{eq:lc_implication}
\end{equation}
\change{For $i \in \Z_{0}^{N-1}$, the implications in~\eqref{eq:lc_implication} may be} implemented as 
\begin{subequations}
\begin{alignat}{3}
-\wl \, (\lu(i) + \ld(i)) &\le \pc(i) \le \wl \, (\lu(i) + \ld(i)),  \label{eq:lane_change1} \\
-\wl + 2\,\wl \, \lu(i) &\le \pc(i) \le \wl - 2\,\wl \, \ld(i).  \label{eq:lane_change2}
\end{alignat} \label{eq:lane_change}%
\end{subequations}
Constraint~\eqref{eq:lane_change2} ensures that $\lu(i) + \ld(i) \le 1$.
\change{The auxiliary state dynamics are}
\begin{subequations}
\begin{alignat}{5}
\pref(i+1) &= \pref(i) + \pc(i), \label{eq:Yref_dyn} \\
n_{\mathrm{LC}}(i+1) &= n_{\mathrm{LC}}(i) + (\lu(i) + \ld(i)), \label{eq:LC_count}
\end{alignat} \label{eq:aux_state}%
\end{subequations}
where 
$n_{\mathrm{LC}}(i)$ counts the number of lane changes over the prediction horizon \change{and is initialized to} $n_{\mathrm{LC}}(0) = 0$.

\begin{Remark}
The state $n_{\mathrm{LC}}(i) \in \Z$ is an integer variable, but it can be relaxed to be continuous because the sum in~\eqref{eq:LC_count} is guaranteed to be integer. Similarly, $\pref$ and $\pc$ could be reformulated as $\pref = \wl \, \tilde{p}_{\mathrm{n}}^{\mathrm{ref}}$ and $\pc = \wl \, \tilde{\Delta}_{\mathrm{c}}$, where $\tilde{p}_{\mathrm{n}}^{\mathrm{ref}} \in \{ 0,1,\ldots,\nl-1 \}$ and $\tilde{\Delta}_{\mathrm{c}} \in \{ -1,0,1 \}$.
State of the art MIP solvers can possibly use these integer feasibility constraints to reduce the computational effort~\cite{nemhauser1988integer}. For simplicity, we only use continuous and binary optimization variables. 
\hfill\QEDopen
\end{Remark}

\subsubsection{Timing Delay Constraints for Lane Changes}

We enforce a minimum time delay of $t_{\mathrm{min}}$ between two consecutive lane changes.
The lane change variables $\lu(i),\ld(i) \in \{0,1\}$ reset a timer $\tc(i)$ as
\begin{equation}
\tc(i+1) = \left\{
\begin{array}{ll}
\tc(i) + \Ts &\quad \text{if} \;\; \lu(i) = \ld(i) = 0, \\
0 &\quad \text{otherwise},
\end{array}
\right. \label{eq:contact}
\end{equation}
which can be implemented by constraints
\begin{equation}
\begin{aligned}
-(1-\lc(i)) \, M &\leq \tc(i+1) \leq (1-\lc(i)) \, M, \\
\hspace{-2mm} \tc(i) + \Ts  - \lc(i) \, M &\leq \tc(i+1) \leq \tc(i) + \Ts  + \lc(i) \, M,
\end{aligned} \label{eq:timer}
\end{equation}
where $\lc(i) = \lu(i)+\ld(i)$ is a compact notation, and $M \gg 0$ is a large positive constant in a big-M formulation~\cite{nemhauser1988integer}. Given $\tc(i)$, we impose a minimum time between lane changes
\begin{equation}
t_{\mathrm{min}} - M\, (1-\lu(i)-\ld(i)) \le \tc(i), \; i \in \Z_{0}^{N-1}, \label{eq:min_time}
\end{equation}
i.e., $\lu(i) = 1$ or $\ld(i) = 1$ only if $\tc(i) \ge t_{\mathrm{min}}$. \change{In a receding horizon implementation of the MIP-DM, the timer $\tc(0)$ is initialized to the value from the previous time step.}

\subsection{Polyhedral Obstacle Avoidance Constraints}
\label{sec:obstacles}

The MIP-DM enforces obstacle avoidance constraints to avoid a \change{region of} collision risk around other traffic participants, e.g., vehicles, bicycles or pedestrians. The \change{position and dimensions} of the safety region may be time varying and adapted to a prediction of the behavior for each of the traffic participants. In addition, obstacle avoidance constraints enforce stopping maneuvers, e.g., in case of a stop sign or a red traffic light at an intersection. Per Assumption~\ref{as:problem}, the prediction of obstacle motions, the map information and the traffic rules are known.
For simplicity, we use axis-aligned rectangular collision regions, as illustrated in Figure~\ref{fig:obstacle_avoidance}.
Alternatively, any polyhedral representation of the collision regions could be used, see, e.g.,~\cite{Richards2005}. The size of the collision region around the obstacle is increased with the geometric shape of the ego vehicle and includes an additional safety margin for robustness to discretization errors, model mismatch and/or disturbances.

As shown in Fig.~\ref{fig:obstacle_avoidance}, obstacle avoidance for an axis-aligned rectangular region results in four disjoint feasible sets. We introduce $4$ auxiliary binary variables $\delta_{\mathrm{o}}^j(i) = [ \delta_{\mathrm{o},k}^j(i) ]_{k \in \Z_1^4}$ for $j \in \Z_1^{\nobs}$, to implement the logical implications
\begin{equation}
\begin{aligned}
	\hspace{-2mm} \delta_{\mathrm{o},1}^j = 1 &\iff \ps \le \ups^j + \scolx,  \\
	\hspace{-2mm}  \delta_{\mathrm{o},2}^j = 1 &\iff \ps \ge \bps^j - \scolx, \\
	\hspace{-2mm}  \delta_{\mathrm{o},3}^j = 1 &\implies \ups^j + \scolx \le \ps \le \bps^j - \scolx \; \land \; \pn \le \upn^j + \scoly, \\
	\hspace{-2mm}  \delta_{\mathrm{o},4}^j = 1 &\implies \ups^j + \scolx \le \ps \le \bps^j - \scolx \; \land \; \pn \ge \bpn^j - \scoly,
\end{aligned} \label{eq:col_implications}
\end{equation}
where we omit the index $i \in \Z_{0}^{N}$ for readability, and we use slack variables $\scolx(i) \ge 0$, $\scoly(i) \ge 0$ to ensure feasibility.
\change{We impose that} the ego vehicle is in one of the feasible sets \change{by} $\sum_{k=1}^4 \delta_{\mathrm{o},k}^j(i) = 1$. Hard obstacle avoidance constraints can be defined by enforcing upper bounds on the slack variables $0 \le  \scolx(i) \le \bar{\nu}_{\mathrm{s}}^{\mathrm{c}}$ and $0 \le  \scoly(i) \le \bar{\nu}_{\mathrm{n}}^{\mathrm{c}}$, see Fig.~\ref{fig:obstacle_avoidance}. To reduce the number of variables in the MIP formulation, a single slack variable $\scolx(i) = a_{\mathrm{sn}}\, \scoly(i)$ may be used, where $a_{\mathrm{sn}} > 0$ is a constant. The implications in~\eqref{eq:col_implications} can be \change{implemented as}
\begin{equation}
\begin{aligned}
\ups^j(i) + \scolx(i) \le \ps(i) + \M \delta_{\mathrm{o},1}^j(i) &\le \ups^j(i) + \scolx(i) + \M, \\
\bps^j(i) - \scolx(i) - \M \le \ps(i) - \M \delta_{\mathrm{o},2}^j(i) &\le \bps^j(i) - \scolx(i), \\
\pn(i) + \M \delta_{\mathrm{o},3}^j(i) &\le \upn^j(i) + \scoly(i) + \M, \\
\pn(i) - \M \delta_{\mathrm{o},4}^j(i) &\ge \bpn^j(i) - \scoly(i) - \M, \\
\ps(i) + \M (\delta_{\mathrm{o},3}^j(i) + \delta_{\mathrm{o},4}^j(i)) &\le \bps^j(i) - \scolx(i) + \M, \\
\ps(i) - \M (\delta_{\mathrm{o},3}^j(i) + \delta_{\mathrm{o},4}^j(i)) &\ge \ups^j(i) + \scolx(i) - \M, \\
\sum_{k=1}^4 \delta_{\mathrm{o},k}^j(i) &= 1,
\end{aligned} \label{eq:obstacle_constraints_v4}
\end{equation}
where $M \gg 0$ denotes \change{the big-M constant}.

\begin{figure}[t]
\centerline{\hbox{
		\includegraphics[width=0.38\textwidth,trim={0cm 0cm 0cm 0cm},clip]{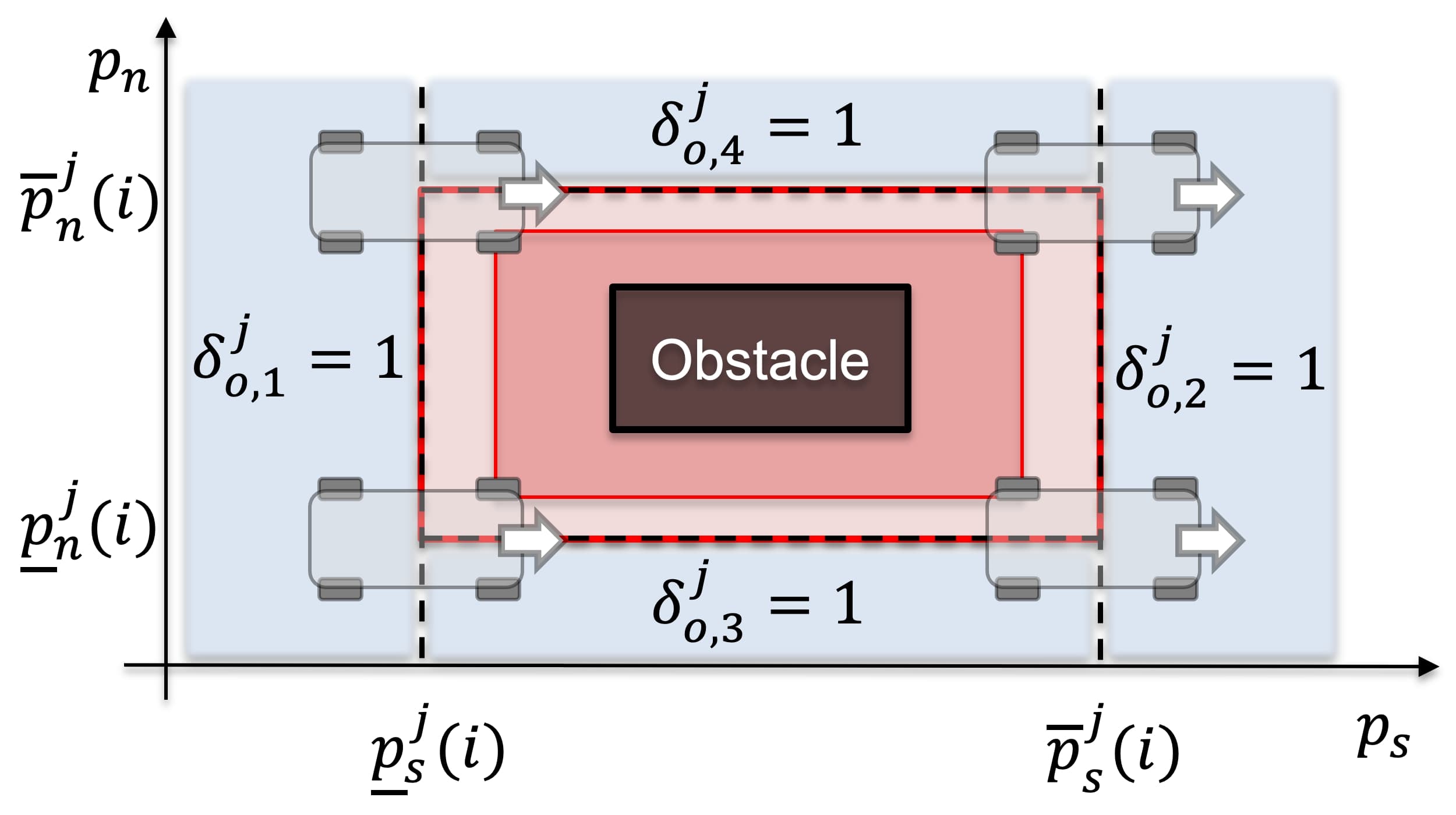}}}		
\caption{Obstacle avoidance constraints using binary variables and an axis-aligned rectangular collision region. The extent of the region is increased by the geometric shape of the ego vehicle and includes an additional safety margin. The light red shaded region is defined by soft constraints, while the dark region is defined by hard constraints.}
\label{fig:obstacle_avoidance}
\end{figure}

\begin{Remark}
	For each obstacle $j \in \Z_1^{\nobs}$ in~\eqref{eq:obstacle_constraints_v4}, we predict its position based on a constant velocity profile in curvilinear coordinates. Future work may include the use of a more advanced prediction model, e.g., a switching dynamical model~\cite{Suriyarachchi2023} or a neural network classifier~\cite{Chen2021}. \hfill\QEDopen
\end{Remark}

\subsubsection{Traffic Intersection Crossing Constraints}
The obstacle avoidance constraints in~\eqref{eq:obstacle_constraints_v4} are also used to prevent the ego vehicle from crossing a traffic intersection, e.g., forcing the vehicle to stop during a particular time window.
Similar to Fig.~\ref{fig:obstacle_avoidance}, the avoidance region is defined by the dimensions of the intersection, enlarged to account for the physical shape of the ego vehicle and with additional safety margins to account for modeling errors. If the intersection is controlled by traffic lights and if the traffic light changes are known, e.g., using V2I communication~\cite{ersal2020}, the intersection crossing constraints are time-varying within the prediction horizon. For example, if it is known that a traffic light will turn red, the intersection crossing constraints~\eqref{eq:obstacle_constraints_v4} cause the ego vehicle to slow down and plan a stopping maneuver. Similarly, the constraints are relaxed at future time steps within the prediction horizon when the traffic lights are predicted to become green. Alternatively, 
the intersection crossing constraints may be implemented based on map information and/or the perception system~\cite{Brummelen2018}.

\subsection{Zone-dependent Traffic Rules}
\label{sec:zones}

In real-world scenarios, traffic rules may change when the vehicle transitions into a particular zone. From one zone to the next, following traffic rule constraints may change 
\begin{itemize}
	\item speed limit, e.g., the vehicle entering a low-speed zone,
	\item allowed lane changes, e.g., when no lane changes are allowed inside a particular zone,
	\item available lanes, e.g., when a three-lane road transitions into a two-lane road or when the vehicle must merge.
\end{itemize}

We introduce binary variables $\delta_{\mathrm{z}} = [\delta_{\mathrm{z}}^1, \ldots, \delta_{\mathrm{z}}^{\nz}]$, where $\nz$ denotes the number of position-dependent zones. Each zone is represented by a range $[\ubar{p}_j,\bar{p}_j]$ for $j \in \Z_1^{\nz}$ in the longitudinal $\ps$-direction. We detect whether the vehicle is in zone $j$ as
\begin{equation*}
	\delta_{\mathrm{z}}^j(i) = 1 \quad \Rightarrow \quad \ubar{p}_j(i) \le \ps(i) \le \bar{p}_j(i),
\end{equation*}
which can be implemented as
\begin{equation}
\begin{aligned}
\ubar{p}_j(i) - \M (1-\delta_{\mathrm{z}}^j(i)) \le \ps(i) &\le \bar{p}_j(i) + \M (1-\delta_{\mathrm{z}}^j(i)). \label{eq:zones}
\end{aligned}
\end{equation}
Because the position-dependent zones are disjoint, the vehicle needs to be inside exactly one zone, i.e., $\sum_{j=1}^{\nz} \delta_{\mathrm{z}}^j = 1$.

The auxiliary binary variables $\delta_{\mathrm{z}}$ and constraints in~\eqref{eq:zones} enable implementing the zone-dependent traffic rules. For example, changing speed limits can be enforced by
\begin{equation}
\vs(i) \leq \sum_{j=1}^{\nz} \delta_{\mathrm{z}}^j \, \bar{v}_{\mathrm{s}}^j(i), \label{eq:newVelConstraint}
\end{equation}
where the speed limit $\bar{v}_{\mathrm{s}}^j(i)$ corresponds to zone $j=1,\ldots,\nz$ and $\sum_{j=1}^{\nz} \delta_{\mathrm{z}}^j = 1$.
Similarly, the allowed number of lane changes can be adjusted as
\begin{equation}
	n_{\mathrm{LC}}(i) \leq \sum_{j=1}^{\nz} \delta_{\mathrm{z}}^j \, \bar{n}^j_{\mathrm{LC}}, \label{eq:numberLC}
\end{equation}
and the constraints on feasible lanes can be adjusted as
\begin{equation}
\sum_{j=1}^{\nz} \delta_{\mathrm{z}}^j  \, \ubar{p}_{\mathrm{n}}^{\mathrm{ref},j}(i) \leq p_{\mathrm{n}}^{\mathrm{ref}}(i) \leq \sum_{j=1}^{\nz} \delta_{\mathrm{z}}^j \, \bar{p}_{\mathrm{n}}^{\mathrm{ref},j}(i). \label{eq:laneBounds}
\end{equation}
Figure~\ref{fig:zones_figure} shows the transition from a three-lane road segment into a two-lane road segment using~\eqref{eq:laneBounds}.

\begin{figure}[t]
	\centerline{\hbox{
			\includegraphics[width=0.36\textwidth,trim={0cm 0cm 0cm 0cm},clip]{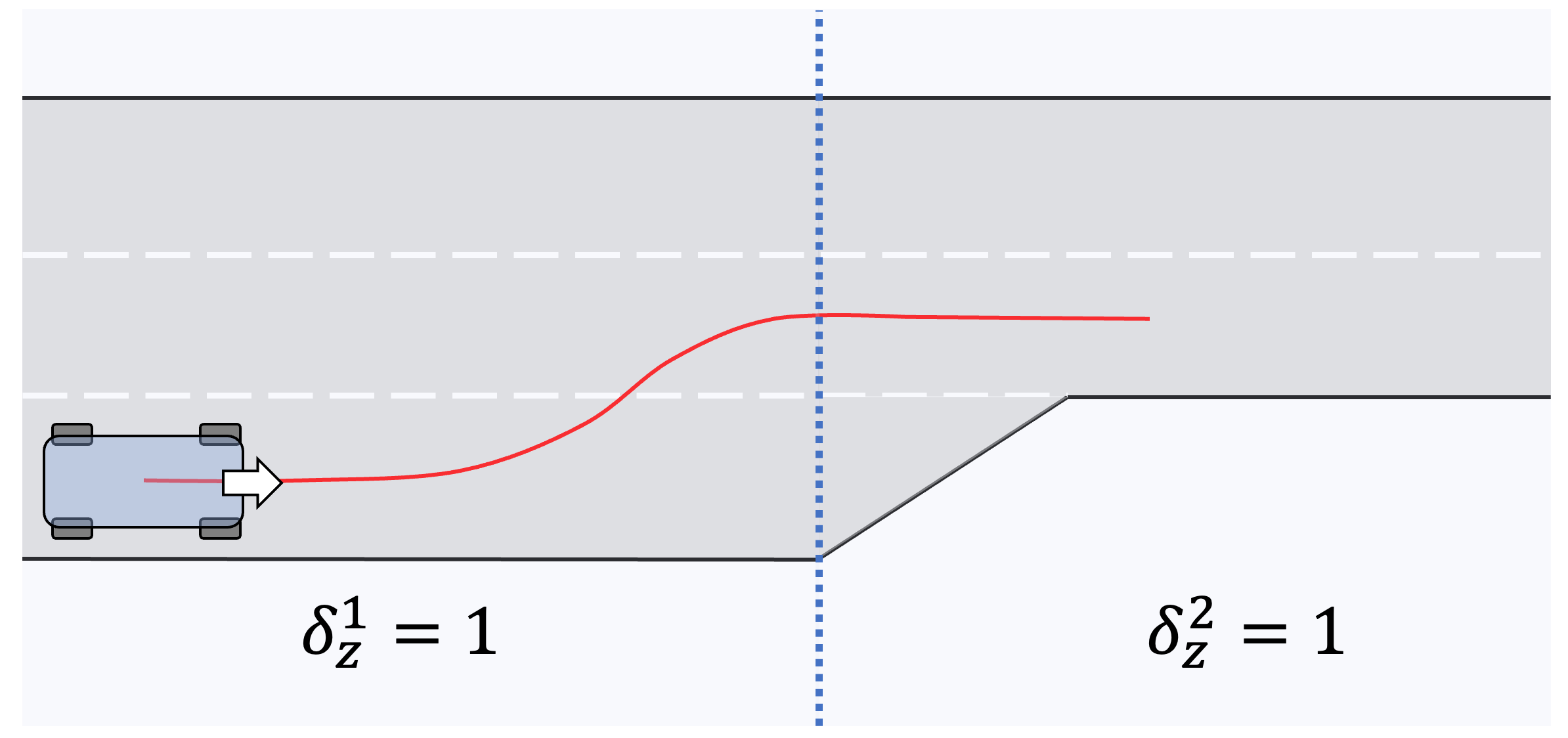}}}		
	\caption{Zone-dependent traffic rule: transition from a zone with three~lanes~($\delta_{\mathrm{z}}^1 = 1$) to a zone with two~lanes~($\delta_{\mathrm{z}}^2=1$), using the proposed MIP inequality constraints in~\eqref{eq:zones} and~\eqref{eq:laneBounds}.}
	\label{fig:zones_figure}
\end{figure}

\subsection{Extended Dynamic System with Auxiliary Variables}

\change{For the prediction model~\eqref{OCP:dyn}, the}
vehicle kinematics~\eqref{eq:kinematics} and the auxiliary dynamics~\eqref{eq:aux_state} result in \change{the augmented} system
\begin{equation}
\begin{aligned}
\begin{bsmallmatrix}
\ps(i+1) \\ \pn(i+1) \\ \vs(i+1) \\ \pref(i+1) \\ n_{\mathrm{LC}}(i+1) 
\end{bsmallmatrix} &= 
\begin{bsmallmatrix}
1 & 0 & \Ts & 0 & 0 \\
0 & 1 & 0 & 0 & 0 \\
0 & 0 & 1 & 0 & 0 \\
0 & 0 & 0 & 1 & 0  \\
0 & 0 & 0 & 0 & 1   \\
\end{bsmallmatrix}
\begin{bsmallmatrix}
\ps(i) \\ \pn(i) \\ \vs(i) \\ \pref(i) \\ n_{\mathrm{LC}}(i) 
\end{bsmallmatrix}
\!+\!
\begin{bsmallmatrix}
0 & 0 & 0 & 0 & 0 \\
0 & \Ts & 0 & 0 & 0 \\
\Ts & 0 & 0 & 0 & 0 \\
0 & 0 & 1 & 0 & 0 \\
0 & 0 & 0 & 1 & 1 \\
\end{bsmallmatrix}
\begin{bsmallmatrix}
\as(i) \\ \vn(i) \\ \pc(i) \\ \lu(i) \\ \ld(i)
\end{bsmallmatrix}.
\end{aligned} \label{eq:sysDyn}
\end{equation}
The MIP-DM \change{also enforces} simple bounds on state variables at each time step $i \in \Z_{0}^{N}$
\begin{equation}
\begin{aligned}
-\frac{\wl}{2} \leq \pn(i) \leq (\nl-\frac{1}{2})\,\wl, \quad \ubar{v}_{\mathrm{s}}(i) \leq \vs(i) \leq \bar{v}_{\mathrm{s}}(i), \\
0 \leq \pref(i) \leq (\nl-1)\,\wl, \quad 0 \leq n_{\mathrm{LC}}(i) \leq n_{\mathrm{LC}}^{\mathrm{max}},
\end{aligned}
\label{eq:bounds_state}
\end{equation}
and simple bounds on control inputs for $i \in \Z_{0}^{N-1}$
\begin{equation}
\ubar{a}_{\mathrm{s}}(i) \leq \as(i) \leq \bar{a}_{\mathrm{s}}(i), \quad \ubar{v}_{\mathrm{n}}(i) \leq \vn(i) \leq \bar{v}_{\mathrm{n}}(i).
\label{eq:bounds_control}
\end{equation}

\subsection{Objective for Decision Making and Motion Planning}
\label{sec:obj_MIQP}

The objective function~\eqref{OCP:obj} of the proposed MIP-DM is
$\sum_{i=0}^{N} \ell_i(x(i),u(i))$, where the stage cost is
\begin{equation}
	\begin{aligned}
		\ell_i  &= w_1\, \Vert \ps(i) - \bpsref(i)  \Vert_2^2 + w_2\, \Vert \pn(i) - \pref(i)  \Vert_2^2 \\
		& + w_3 \, \as(i)^2 + w_4 \, \vn(i)^2 + w_5 \, \lc(i) \\
		& + w_6 \, | \pref(i) - \bpref(i) | + w_7 \, \scol(i),
		\label{eq:stageCost}
	\end{aligned}
\end{equation}
where $\lc(i) = \lu(i)+\ld(i)$, $\scol(i) = \scolx(i) + \scoly(i)$, and $w_j \ge 0$ for $j=1,\ldots,7$ are the weights. The first term in~\eqref{eq:stageCost} is the longitudinal tracking error with respect to a reference trajectory $\bpsref(i)$, e.g., computed based on a desired reference velocity. The second term minimizes the lateral tracking error with respect to the current center lane. The third and fourth terms penalize the control actions, i.e., the longitudinal acceleration and lateral velocities, respectively. The fifth term penalizes lane change decisions. 

The sixth term in~\eqref{eq:stageCost} minimizes a tracking error of the current lane with respect to a given preferred lane value $\bpref(i)$, e.g., the right lane in right-hand traffic or the left \change{most} lane when a vehicle desires to make a left turn at a next traffic intersection. To handle the absolute value in~\eqref{eq:stageCost}, we minimize an auxiliary control variable $\Delta \pref$, satisfying
\begin{equation}
	\Delta \pref \ge \pref - \bpref, \quad \Delta \pref \ge \bpref - \pref,
\end{equation}
such that $\Delta \pref \ge | \pref - \bpref |$ holds. The squared terms in~\eqref{eq:stageCost} may be replaced by absolute values which results in a \change{mixed-integer linear program~(MILP)} instead of an MIQP.
The last term in~\eqref{eq:stageCost} corresponds to a penalty on the slack variables for soft constraint violations. The weight $w_7 \gg 0$ is chosen large enough to ensure that a feasible solution with $\scol(i) = 0$ is found if and when it exists.

The complete MIOCP of the proposed MIP-DM reads as
\begin{equation}
	\begin{aligned}
	\underset{X,\,U}{\text{min}} \quad & \sum_{i=0}^{N} \ell_i(x(i),u(i)) \text{ in Eq.~\eqref{eq:stageCost}} \\
	\text{s.t.} \quad\; & x(0) = \change{\hat{x}_t},  	\\
	& \text{Extended state dynamics in Eq.~\eqref{eq:sysDyn}},  \\
	& \text{Simple bound constraints in Eqs.~\eqref{eq:bounds_state}-\eqref{eq:bounds_control}},  \\
	& \text{Lateral velocity constraint in Eq.~\eqref{eq:steering_new}},  \\
	& \text{Lateral position constraint in Eq.~\eqref{eq:lane_bound}},  \\
	& \text{Lane change constraints in Eq.~\eqref{eq:lane_change}},  \\
	& \text{Time delay constraints in Eqs.~\eqref{eq:timer}-\eqref{eq:min_time}},  \\
	& \text{Obstacle avoidance constraints:} \; \text{Section~\ref{sec:obstacles}},   \\
	& \text{Zone-dependent traffic rules:} \; \text{Section~\ref{sec:zones}}. 
	\end{aligned} \label{eq:MIOCP_complete}
\end{equation}
The state vector is $x = [\ps, \pn, \vs, \pref, n_{\mathrm{LC}}, t_{\mathrm{c}}]$, and the control \change{and auxiliary} input vector is $u = [\as, \vn, \tilde{t}_{\mathrm{c}}, \pc, \deltac, \delta_{\mathrm{o}}, \delta_{\mathrm{z}}]$. The binary optimization variables include the lane change variables $\deltac = [\lu, \ld]$, the obstacle avoidance variables $\delta_{\mathrm{o}} = [\delta_{\mathrm{o}}^1, \ldots, \delta_{\mathrm{o}}^{\nobs}]$, and the traffic zone variables $\delta_{\mathrm{z}} = [\delta_{\mathrm{z}}^1, \ldots, \delta_{\mathrm{z}}^{\nz}]$, while the remaining variables are continuous.

\begin{Remark}
By defining an upper bound on the number of other vehicles for obstacle avoidance in a realistic traffic environment, \change{the MIOCP has} fixed dimensions that allows for static memory allocation in an embedded implementation of the MIP-DM for microprocessors suitable to automotive applications, \change{as discussed later}. \hfill\QEDopen
\end{Remark}

\section{Embedded MIQP Solver for Mixed-Integer Model Predictive Control}
\label{sec:MIMPC_solver}

\change{The MIOCP~\eqref{eq:MIOCP_complete} is converted into the MIQP
\begin{subequations} \label{eq:MIQP}
	\begin{alignat}{5}
		\underset{\z}{\text{min}} \quad & \frac{1}{2} {\z}^{\top} H\, \z + h^\top \z \label{eq:MIQP-primal} \\
		\text{s.t.} \quad\; & G\, \z \;\le \;g, \quad F\, \z \;&&= \;f, \label{eq:MIQP-GF} \\
		\quad\; & \z_{j} \in \Z, \quad && j \in \I, \label{MIQP:int}
	\end{alignat}
\end{subequations} 
where $\z$ includes all optimization variables and the index set $\I$ denotes the integer variables.}
Next, we summarize the main ingredients of the \software{BB-ASIPM} solver~\cite{Quirynen2022} that uses a B\&B method with reliability branching and warm starting~\cite{Hespanhol2019}, block-sparse presolve techniques~\cite{Quirynen2022}, early termination and infeasibility detection~\cite{Liang2021} within a fast convex QP solver~\cite{ASIPM}.

\subsection{Branch-and-bound Method and Search Heuristics}
\label{sec:BandB}

The B\&B algorithm sequentially creates partitions of the original MIQP problem as shown in Figure~\ref{fig:BB_ilu}. For each partition, a local lower bound on the optimal objective value is obtained by solving a convex relaxation of the MIQP subproblem. 
If the relaxation yields an integer-feasible solution, the B\&B updates the global upper bound for the MIQP solution, which is used to \emph{prune} tree partitions. 
The B\&B method terminates when the difference between the upper and lower bound
 is below a user-defined threshold. A key decision of the B\&B procedure is how to create partitions, i.e., which node to choose and which discrete variable to select for branching. \software{BB-ASIPM} uses \emph{reliability branching} which combines strong branching and pseudo-costs~\cite{achterberg2005branching}.

\begin{figure} 
	\centering
	\includegraphics[trim={0 0 0 0},width=0.4\textwidth]{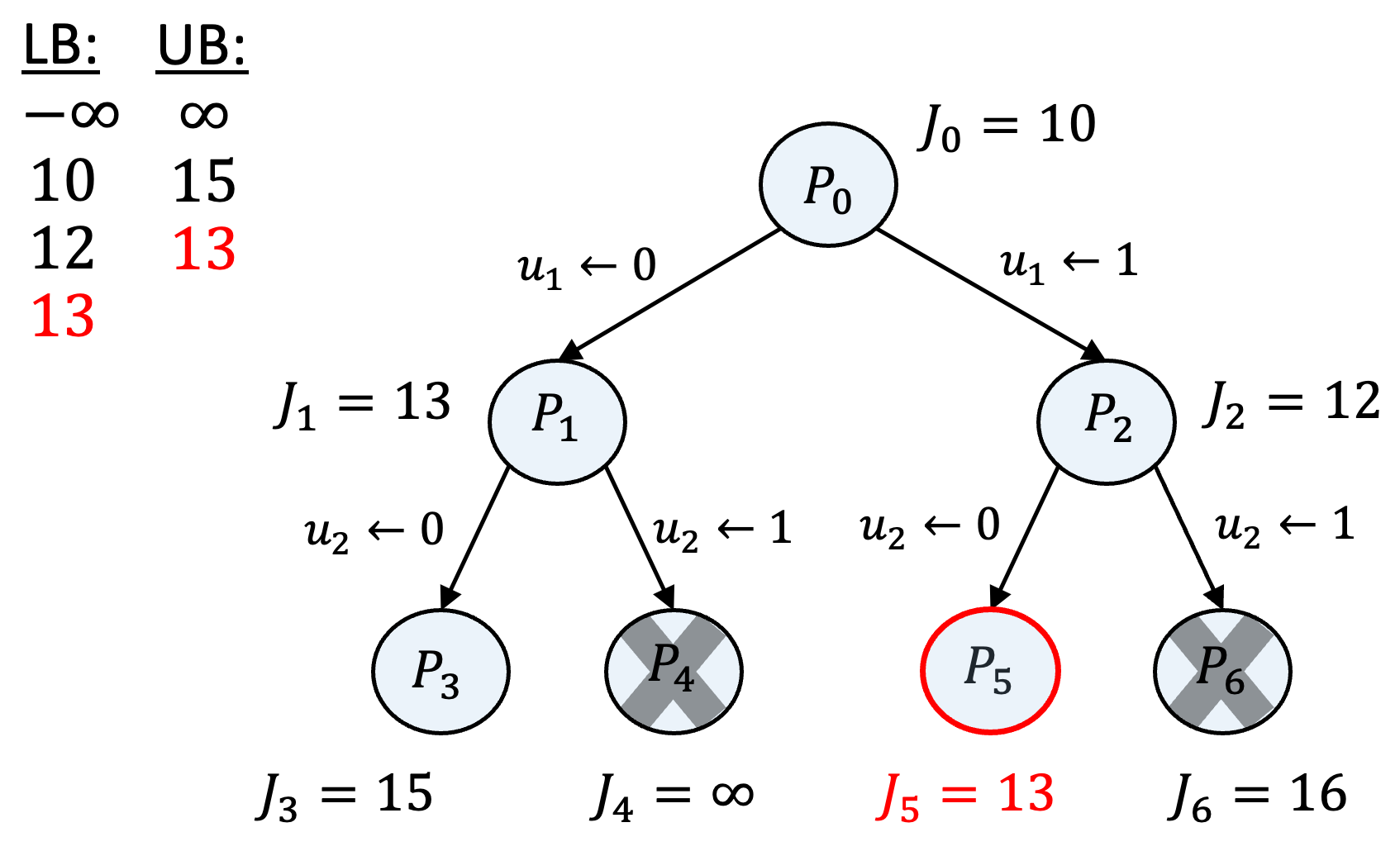}
	\caption{Branch-and-bound~(B\&B) method as a binary search tree. A selected node can be either \emph{branched}, resulting in $2$~partitions for each binary variable $\change{u_j} \in \{0, 1\}$, or \emph{pruned} based on feasibility or the current upper bound.}
	\label{fig:BB_ilu}
\end{figure}

\subsection{Tailored Exact Presolve Reduction Techniques}
\label{sec:presolve}

We refer to the parametric MIQP from~\eqref{eq:MIQP} as $\P(\theta)$, in which the parameter vector $\theta$ includes the state estimate $\change{\hat{x}_t}$, and we denote the discrete variables in~\eqref{MIQP:int} by $\bin \in \Z^{N_\bin}$. We use the compact notation $\P(\theta,\bin_{\fixed} = \hat{\bin})$
to denote the MIQP after fixing $\bin_j = \hat{\bin}_j, j\in\fixed$ where $\fixed$ is an index set.

\begin{Definition}[Presolve Step]
	Given problem $\P(\theta)$ and a set of integer values $\{\hat{\bin}_j\}_{j \in \fixed}$ for the index set $\fixed \subseteq \{ 1, \dots, N_\bin \}$, the presolve step computes
	\begin{equation}
		\{\flag, \hat{\bin}^+,\fixed^+\} \gets \text{Presolve}(\P(\theta),\hat{\bin},\fixed),
	\end{equation}
	resulting in updated integer values $\{\hat{\bin}_j^+\}_{j \in \fixed^+}$ for the index set $\fixed^+ \subseteq \{ 1, \dots, N_\bin \}$, such that:
	\begin{enumerate}
		\itemsep0pt
		\item \change{The} new index set includes the original set, $\fixed \subseteq \fixed^+$.
		\item $\P(\theta,\bin_{\fixed^+} = \hat{\bin}^+)$ is infeasible / unbounded only if $\P(\theta,\bin_{\fixed} = \hat{\bin})$ is infeasible / unbounded.
		\item Any feasible / optimal solution of $\P(\theta,\bin_{\fixed^+} = \hat{\bin}^+)$ maps to a feasible / optimal solution of $\P(\theta,\bin_{\fixed} = \hat{\bin})$, with identical objective value. \hfill\QEDopen
	\end{enumerate}
	\label{def:presolve}
\end{Definition}
A presolve routine applied to a root node in B\&B corresponds to Definition~\ref{def:presolve} with $\fixed=\emptyset$.
In general, presolve cannot prune all of the binary or integer decision variables, but often it leads to a reduced problem that is significantly faster to solve. 

We use the tailored block-sparse presolve procedure~\cite[Section~4]{Quirynen2022} that abides by the rules in Def.~\ref{def:presolve}, \change{and includes:}
\begin{itemize}
	\item \emph{Domain propagation} to strengthen bounds based on constraints of the MIQP, which may lead to fixing multiple integer variables. A tailored implementation for MIOCPs based on an iterative forward-backward propagation is described in~\cite[Alg.~2]{Quirynen2022}.
	\item \emph{Redundant constraints} are detected and removed based on updated bound values, which may also benefit \emph{dual fixing} of multiple variables, see~\cite[Alg.~4]{Quirynen2022}.
	\item \emph{Coefficient strengthening} to tighten the feasible space of the convex QP relaxation without removing any integer-feasible solution of the MIQP. A block-sparse implementation is described in~\cite[Alg.~5]{Quirynen2022}.
	\item \emph{Variable probing} to obtain tightened bound values for multiple optimization variables by temporarily fixing a binary variable to~$0$ and~$1$, see~\cite[Alg.~6]{Quirynen2022}.
\end{itemize}
The presolve procedure in~\cite{Quirynen2022} terminates if the problem is detected to be infeasible or if insufficient progress is made from one iteration to the next.
An upper limit on the number of presolve iterations and/or a timeout is typically needed to ensure computational efficiency, and it generally results in a considerable speedup of the B\&B computations.

\subsection{Block-sparse QP solver for Convex Relaxations}
\label{sec:convexQP}

A primal-dual interior point method~(IPM) uses a Newton-type algorithm to solve a sequence of relaxed Karush-Kuhn-Tucker~(KKT) conditions for the convex QP.
We use the active-set based inexact Newton implementation of \software{ASIPM}~\cite{ASIPM}, \change{which exploits the} block-sparse structure in the linear system, \change{with improved numerical conditioning,} reduced matrix factorization updates, warm starting, early termination and infeasibility detection~\cite{Liang2021}.
\change{If the convex QP relaxation}
\begin{itemize}
	\item is infeasible,
	\item has \change{optimal} value that exceeds the current global upper bound in the B\&B method,
\end{itemize}
the node and corresponding subtree can be pruned from the B\&B tree. A considerable computational effort can be avoided if the above scenarios are detected early, i.e., more quickly than solving the convex QPs. In~\cite{Liang2021}, \change{we} describe an early termination method based on a tailored dual feasibility projection strategy \change{applicable to \software{BB-ASIPM}} to handle both cases and to reduce the computational effort of the B\&B method without affecting the quality of the optimal solution.

\subsection{Embedded Software Implementation for \change{Hybrid MPC}}
\label{sec:software}

In \change{hybrid MPC}, warm starting can be used to reduce the computational effort in the B\&B method from one time step to the next as discussed in~\cite{Bemporad2018,Marcucci2021}. \change{\software{BB-ASIPM} uses} \emph{tree propagation}~\cite{Hespanhol2019,Quirynen2022} to efficiently reuse the branching decisions and pseudo-costs from the previous MIQP solution. An upper bound can be imposed on the number of B\&B iterations to ensure a maximum computation time below a threshold. If an integer-feasible solution is found, a B\&B method automatically provides a bound on the suboptimality of this MIQP solution. The \software{BB-ASIPM} solver is implemented in self-contained C~code, which allows for real-time implementations on embedded \change{microprocessors} as shown next.

\section{Numerical Simulation Results}
\label{sec:simulation}

We present numerical simulation results for the MIP-DM described in Section~\ref{sec:MIQP_form}, in a variety of traffic scenarios. We also compare the \software{BB-ASIPM} solver \change{from Section~\ref{sec:MIMPC_solver}} against state-of-the-art software tools, and we demonstrate its real-time feasibility on dSPACE rapid prototyping units.

\subsection{Problem Formulation and Simulation Test Scenarios}

\begin{table}[t]
	\normalsize
		\caption{Problem dimensions and parameters in MIQP formulation of Section~\ref{sec:MIQP_form} for each of the test scenarios in Fig.~\ref{fig:test_scenarios}. The number of binary variables per time step in the MIOCP prediction time horizon is $n_\delta = 2+3\,\nobs+\nz$.}
	\label{tab:test_scenarios}
	\centering
	\setlength{\tabcolsep}{0.6em}
	\begin{tabular}{ l | c c c c c c c }
		\toprule
		& $N$ & $\nx$ & $\nU$ & $n_\delta$ & $\nc$ & $\nobs$ & $\nz$  \\
		\midrule
		\case{1} see Fig.~\ref{fig:test_scenario1} & 15 & 6 & 20 & 14 & 60 & 3 & 3  \\ 
		\case{2} see Fig.~\ref{fig:test_scenario2} & 15 & 6 & 18 & 12 & 56 & 3 & 1  \\ 
		\case{3} see Fig.~\ref{fig:test_scenario3} & 15 & 6 & 23 & 17 & 71 & 4 & 3  \\ 
		\case{4} see Fig.~\ref{fig:test_scenario4} & 15 & 6 & 24 & 18 & 73 & 4 & 4  \\ 
		\case{5} see Fig.~\ref{fig:test_scenario5} & 15 & 6 & 16 & 10 & 47 & 2 & 2  \\ 
		\case{6} see Fig.~\ref{fig:test_scenario6} & 15 & 6 & 17 & 11 & 49 & 2 & 3  \\ 
		\case{7} see Fig.~\ref{fig:test_scenario7} & 15 & 6 & 20 & 14 & 60 & 3 & 3  \\ 
		\case{8} see Fig.~\ref{fig:test_scenario8} & 15 & 6 & 20 & 14 & 60 & 3 & 3  \\ 
		\bottomrule
	\end{tabular}
\end{table}

\begin{figure*}
	\centering
	\hspace{-1cm}
	\begin{minipage}{0.4\linewidth}%
		\begin{subfigure}[b]{\textwidth}
			\centering
			\includegraphics[width=1.0\textwidth,trim={0cm 0cm 0 0cm},clip]{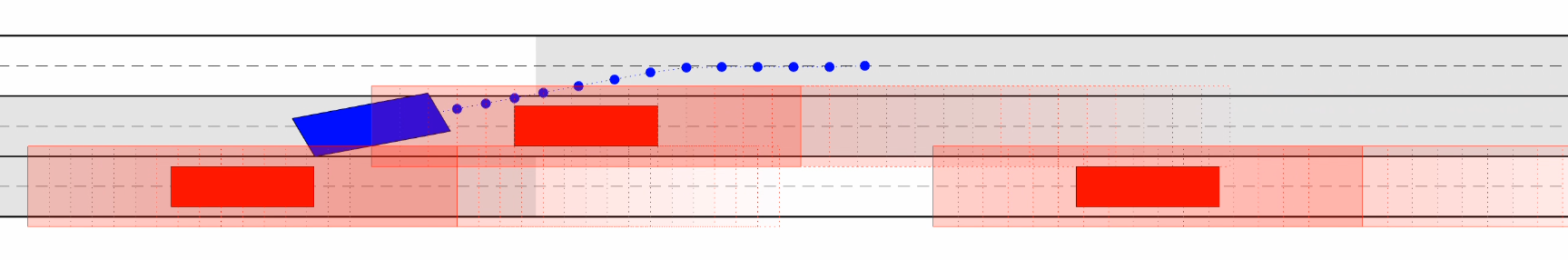}
			\captionsetup{font=normalsize,labelfont={sf}}
			\caption[]%
			{\normalsize Scenario~1: ego vehicle overtaking three obstacles 
				on a road with one-way traffic.}
			\label{fig:test_scenario1}
		\end{subfigure}
		\vskip\baselineskip\vspace{2mm}
		\begin{subfigure}[b]{\textwidth}
			\centering
			\includegraphics[width=1.0\textwidth,trim={0cm 0cm 0 0cm},clip]{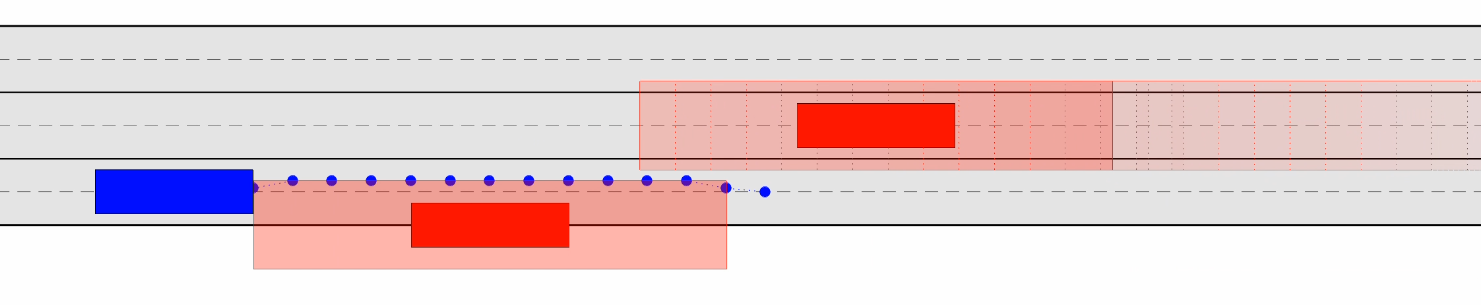}
			\captionsetup{font=normalsize,labelfont={sf}}
			\caption[]%
			{\normalsize Scenario~2: ego vehicle swaying for two parked vehicles (only one visible), avoiding a third vehicle on other lane with one-way traffic.}
			\label{fig:test_scenario2}
		\end{subfigure}
		\vskip\baselineskip\vspace{2mm}
		\begin{subfigure}[b]{\textwidth}
			\centering
			\includegraphics[width=1.0\textwidth,trim={0cm 0cm 0 0cm},clip]{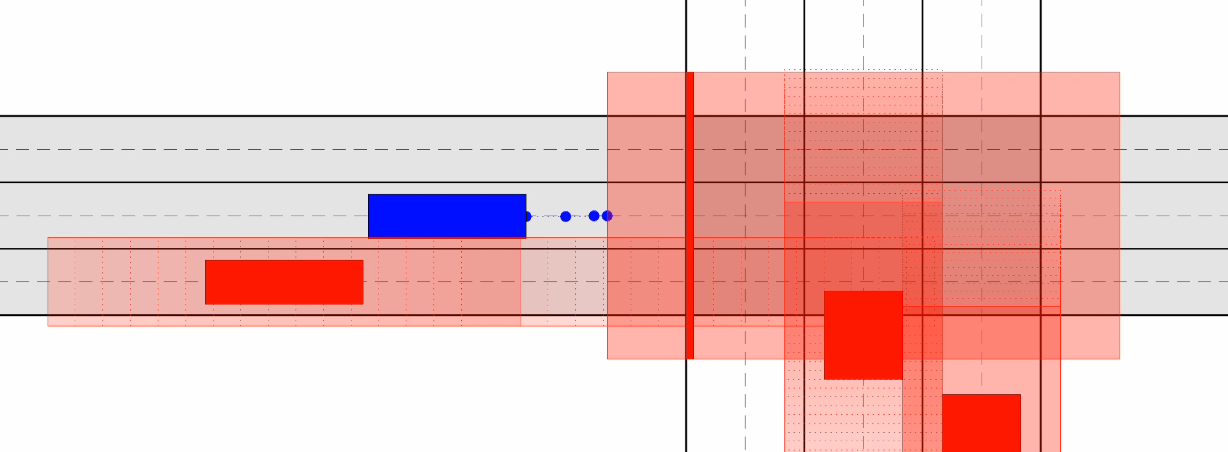}
			\captionsetup{font=normalsize,labelfont={sf}}
			\caption[]%
			{\normalsize Scenario~3: ego vehicle overtaking before stopping at intersection, then ego continues after two~vehicles finish crossing intersection.}
			\label{fig:test_scenario3}
		\end{subfigure}
		\vskip\baselineskip\vspace{2mm}
		\begin{subfigure}[b]{\textwidth}
			\centering
			\includegraphics[width=1.0\textwidth,trim={0cm 0cm 0 0cm},clip]{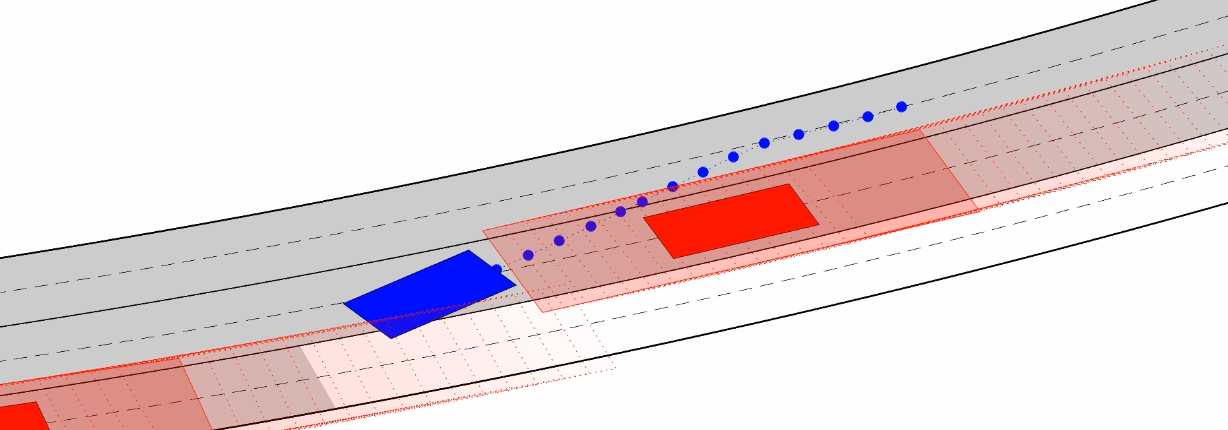}
			\captionsetup{font=normalsize,labelfont={sf}}
			\caption[]%
			{\normalsize Scenario~4: ego vehicle overtaking obstacles on a curved road with one-way traffic, followed by stopping and crossing an intersection.}
			\label{fig:test_scenario4}
		\end{subfigure}
	\end{minipage}
	\begin{minipage}{0.1\linewidth}%
		\hspace{5mm}
	\end{minipage}
	\begin{minipage}{0.4\linewidth}%
		\begin{subfigure}[b]{\textwidth}
			\centering
			\includegraphics[width=1.1\textwidth,trim={0cm 0cm 0 0cm},clip]{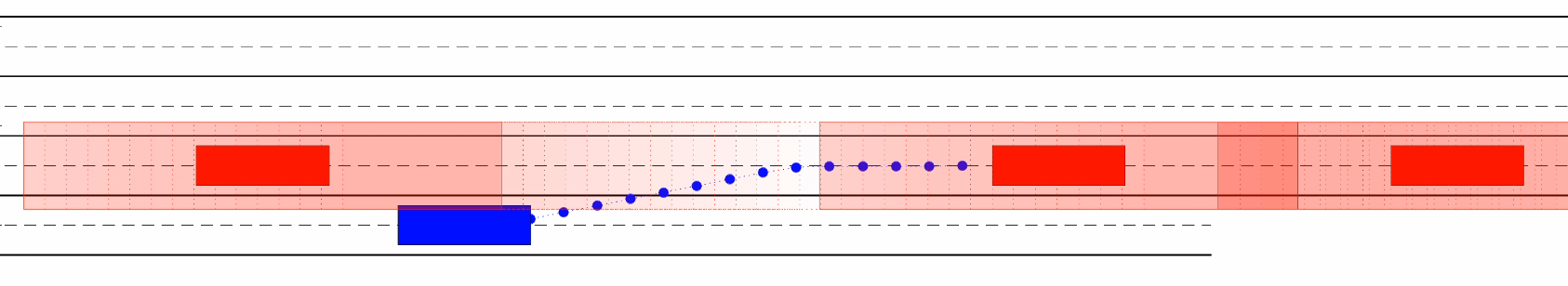}
			\captionsetup{font=normalsize,labelfont={sf}}
			\caption[]%
			{\normalsize Scenario~5: ego vehicle merging to lane~2 between three~vehicles with one-way traffic.}
			\label{fig:test_scenario5}
		\end{subfigure}
		\vskip\baselineskip\vspace{2mm}
		\begin{subfigure}[b]{\textwidth}
			\centering
			\includegraphics[width=1.1\textwidth,trim={0cm 0cm 0 0cm},clip]{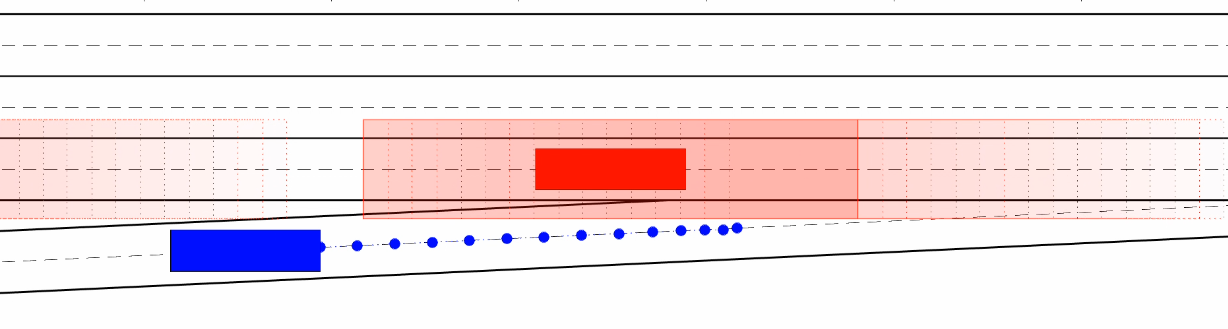}
			\captionsetup{font=normalsize,labelfont={sf}}
			\caption[]%
			{\normalsize Scenario~6: ego vehicle merging at the end of lane onto a new lane while avoiding / overtaking three~vehicles (only one visible).}
			\label{fig:test_scenario6}
		\end{subfigure}
		\vskip\baselineskip\vspace{2mm}
		\begin{subfigure}[b]{\textwidth}
			\centering
			\includegraphics[width=1.1\textwidth,trim={0cm 0cm 0 0cm},clip]{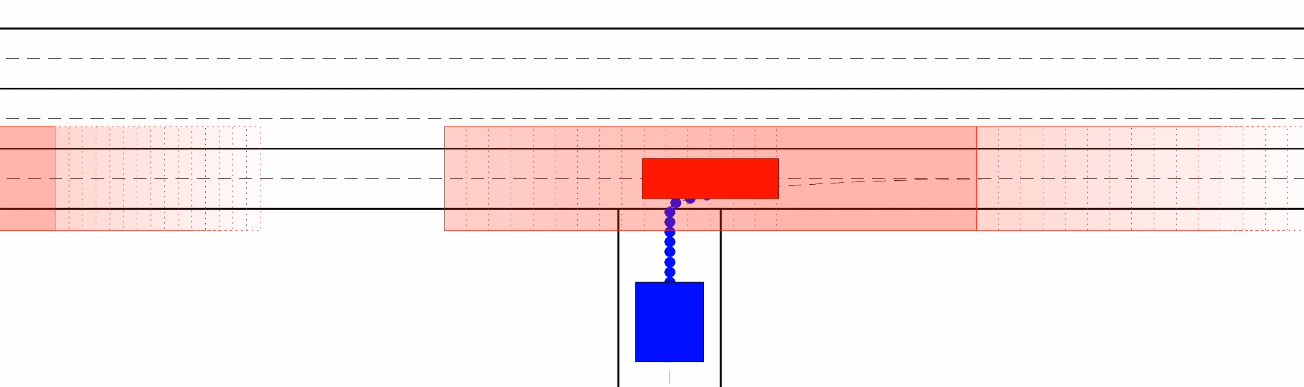}
			\captionsetup{font=normalsize,labelfont={sf}}
			\caption[]%
			{\normalsize Scenario~7: ego vehicle performs right turn at a T-intersection, merging between two vehicles (only one visible) on same lane of the road segment.}
			\label{fig:test_scenario7}
		\end{subfigure}
		\vskip\baselineskip\vspace{2mm}
		\begin{subfigure}[b]{\textwidth}
			\centering
			\includegraphics[width=1.1\textwidth,trim={0cm 0cm 0 0cm},clip]{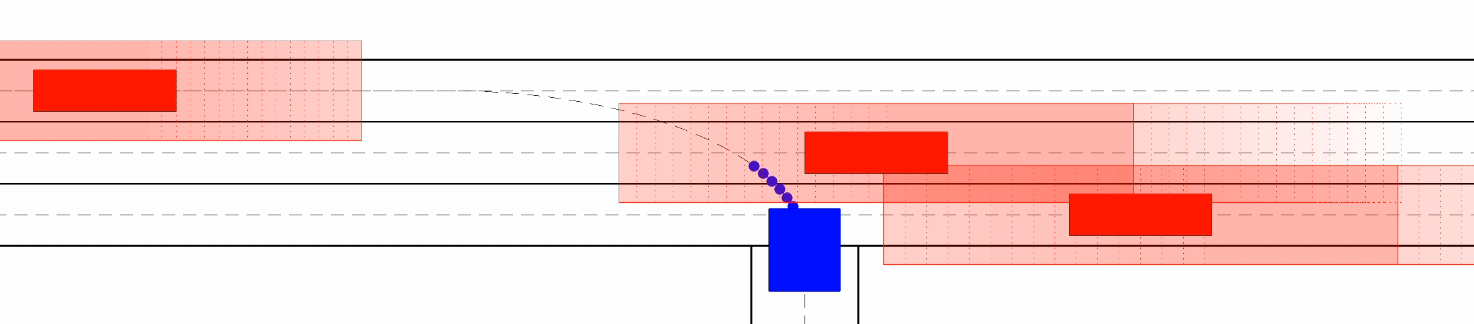}
			\captionsetup{font=normalsize,labelfont={sf}}
			\caption[]%
			{\normalsize Scenario~8: ego vehicle turns left at T-intersection, following one vehicle while avoiding two other vehicles driving in the opposite direction.}
			\label{fig:test_scenario8}
		\end{subfigure}
	\end{minipage}
	\caption[ ]{Snapshot of the closed-loop Matlab simulations using the MIP-DM in $8$~test scenarios. \change{The ego vehicle is shown in blue, other vehicles in red.} A video recording of the simulations is available at: \url{https://youtu.be/FyaGRZvuqmA}.}
	\label{fig:test_scenarios}
\end{figure*}

In this section, we perform closed-loop simulations of MIP-DM in Matlab using the vehicle model in~\eqref{eq:kinematics},
\change{to} show the variety of traffic scenarios that can be handled explicitly using the MIQP in Section~\ref{sec:MIQP_form}. \change{We use a simple model~\eqref{eq:kinematics} to assess the behavior and the stand-alone computational load of MIP-DM. Robustness to model approximations and uncertainty is validated in the experiments shown later.}

Figure~\ref{fig:test_scenarios} shows a snapshot of the Matlab simulations for $8$~test scenarios.
Table~\ref{tab:test_scenarios} shows the problem dimensions and parameter values in the MIQP formulation of Section~\ref{sec:MIQP_form} for the test scenarios in Fig.~\ref{fig:test_scenarios},
where $N=15$ is the horizon length, $\nx$ is the number of state variables, $\nU$ is the number of control variables, $\nc$ is the number of inequality constraints, each per time step, and $n_\delta = 2+3\,\nobs+\nz$ is the number of binary variables per time step, \change{with} $\nobs$ the maximum number of obstacles~(see Section~\ref{sec:obstacles}), and $\nz$ the number of zones~(see Section~\ref{sec:zones}). Using a sampling time of $\Ts = 1$~s, the MIP-DM time horizon is $T = N\, \Ts = 15$~s.

Scenario~1 in Fig.~\ref{fig:test_scenario1} shows the ego vehicle overtaking three obstacles, where two obstacles are on lane~1 and a third obstacle is on lane~2, on a road segment with one-way traffic. \change{Lane~1 refers to the right most lane with respect to the ego vehicle's direction of motion.} Scenario~2 in Fig.~\ref{fig:test_scenario2} shows the ego vehicle swaying around two parked vehicles~(with zero velocity) on lane~1, while avoiding a third vehicle on lane~2.
Scenario~3 in Fig.~\ref{fig:test_scenario3} shows the ego vehicle overtaking one vehicle on lane~1 before stopping at a traffic intersection, then crossing after two other vehicles. Scenario~4 in Fig.~\ref{fig:test_scenario4} shows the ego vehicle overtaking three obstacles (two~vehicles on lane~1 and one~vehicle on lane~2) on a curved road segment with one-way traffic, followed by stopping and crossing an intersection. In the test scenarios~1-4, lane~1 is the preferred lane $\bpref$ in~\eqref{eq:stageCost}, so that the ego vehicle always returns to lane~1 after each overtaking or sway maneuver. 

Scenario~5 in Fig.~\ref{fig:test_scenario5} shows the ego vehicle merging from lane~1 to lane~2 between three vehicles on lane~2, i.e., the preferred lane $\bpref$ in~\eqref{eq:stageCost} is lane~2. Scenario~6 in Fig.~\ref{fig:test_scenario6} shows the ego vehicle merging at the end of a current lane onto a new lane while avoiding and/or overtaking three vehicles that are driving on the same lane. Scenario~7 in Fig.~\ref{fig:test_scenario7} shows the ego vehicle performing a right turn at a T-intersection, merging between two vehicles on the same lane of the new road segment. Scenario~8 in Fig.~\ref{fig:test_scenario8} shows the ego vehicle performing a left turn at a T-intersection, following one vehicle on the same lane while avoiding two other vehicles driving in the opposite direction.
In the test scenarios~5-8, after a merging or turning maneuver, the ego vehicle overtakes any other vehicle that is driving below the speed limit.

\subsection{Computational Performance and Solver Comparisons}

\begin{table*}[t]
	\normalsize
	\parbox{\textwidth}{
		\caption{Average and worst-case computation times for each of the $8$~scenarios in Figure~\ref{fig:test_scenarios} for MIP-DM with the MIQP formulation in Section~\ref{sec:MIQP_form}, \change{using} \software{GUROBI}, \software{MOSEK} and \software{BB-ASIPM} solver.}
		\label{tab:Matlab_results}
	}
	\centering
	\setlength{\tabcolsep}{1em}
	\begin{tabular}{ l | c c | c c | c c }
		\toprule
		& \multicolumn{2}{c}{\software{GUROBI}} & \multicolumn{2}{c}{\software{MOSEK}} & \multicolumn{2}{c}{\software{BB-ASIPM}}  \\
		& Mean time & Max time & Mean time & Max time & Mean time & Max time  \\
		\midrule
		\case{1} see Fig.~\ref{fig:test_scenario1} & $9.2$~ms  &  $21.8$~ms & $116.2$~ms  &  $466.6$~ms & $16.3$~ms  &  $65.9$~ms  \\ 
		\case{2} see Fig.~\ref{fig:test_scenario2} & $4.1$~ms  &  $11.7$~ms & $25.2$~ms  &  $140.5$~ms & $6.0$~ms  &  $37.7$~ms  \\ 
		\case{3} see Fig.~\ref{fig:test_scenario3} & $4.9$~ms  &  $15.3$~ms & $47.1$~ms  &  $180.0$~ms & $7.2$~ms  &  $38.5$~ms  \\ 
		\case{4} see Fig.~\ref{fig:test_scenario4} & $4.7$~ms  &  $17.3$~ms & $41.3$~ms  &  $160.9$~ms & $7.3$~ms  &  $48.7$~ms  \\ 
		\case{5} see Fig.~\ref{fig:test_scenario5} & $3.9$~ms  &  $14.3$~ms & $37.0$~ms  &  $231.7$~ms & $6.0$~ms  &  $45.0$~ms  \\ 
		\case{6} see Fig.~\ref{fig:test_scenario6} & $4.4$~ms  &  $16.8$~ms & $44.6$~ms  &  $198.9$~ms & $6.1$~ms  &  $39.3$~ms  \\ 
		\case{7} see Fig.~\ref{fig:test_scenario7} & $3.6$~ms  &  $15.7$~ms & $23.4$~ms  &  $235.6$~ms & $4.7$~ms  &  $39.7$~ms  \\ 
		\case{8} see Fig.~\ref{fig:test_scenario8} & $3.2$~ms  &  $15.1$~ms & $21.3$~ms  &  $130.7$~ms & $3.8$~ms  &  $29.4$~ms  \\ 
		\bottomrule
	\end{tabular}
\end{table*}

Table~\ref{tab:Matlab_results} shows the average and worst-case computation times of MIP-DM for each of the $8$~simulation scenarios that are illustrated in Figure~\ref{fig:test_scenarios}, using the MIQP formulation as described in Section~\ref{sec:MIQP_form} and where the MIQPs at each control time step are solved using either \software{GUROBI}, \software{MOSEK} or \software{BB-ASIPM}. It can be observed that the average and worst-case computation times of \software{BB-ASIPM} are approximately~$6$ and $5$ times faster than \software{MOSEK}, respectively. On the other hand, the average and worst-case computation times of \software{GUROBI} are approximately~$1.5$ and $2.5$ times faster than \software{BB-ASIPM}, respectively. \change{Note that all default presolve options are enabled in the \software{GUROBI} solver.}

Given the relatively simple and compact algorithmic implementation in \software{BB-ASIPM}, e.g., compared to the extensive collection of advanced heuristics, presolve and cutting plane techniques in the commercial \software{GUROBI}~\cite{gurobi} solver, it is reassuring to see that the tailored \software{BB-ASIPM} solver can remain competitive with state-of-the-art software tools in Table~\ref{tab:Matlab_results}.
The software implementation of \software{BB-ASIPM}~\cite{Quirynen2022} is relatively compact and self-contained such that it can execute on an embedded microprocessor for real-time vehicle decision making and motion planning. Instead, state-of-the-art optimization tools, such as \software{GUROBI} and \software{MOSEK} typically cannot be used on embedded control hardware with limited computational resources and available memory~\cite{DiCairano2018tutorial}.

\subsection{Hardware-in-the-loop Simulation Results on dSPACE Scalexio and MicroAutoBox-III Rapid Prototyping Units}
\label{sec:dSPACE}

Next, we present detailed results of running hardware-in-the-loop simulations for each of the $8$~test scenarios shown in Figure~\ref{fig:test_scenarios} on both the dSPACE Scalexio\footnote{dSPACE Scalexio DS6001 unit, with an Intel i7-6820EQ quad-core $2.8$~GHz processor with 64 kB L1 cache per core, 256 kB L2 cache per core, 8 MB shared L3 cache, 4 GB DDR4 RAM, and 8 GB flash memory. \change{In the presented results, MIP-DM executes in a single core.}} and the dSPACE MicroAutoBox-III~(MABX-III)\footnote{dSPACE MicroAutoBox-III DS1403 unit, with four ARM Cortex-A15 processor cores with 32 kB L1 cache per core, 4 MB shared L2 cache, 2 GB DDR3L RAM, and 64 MB flash memory. \change{In the presented results, MIP-DM executes in a single core.}} rapid prototyping units.
Table~\ref{tab:dSPACE_results} shows the average and worst-case computation times, the number of B\&B iterations, total number of ASIPM iterations, and the memory usage of the \software{BB-ASIPM} solver on Scalexio and MABX-III.
The memory usage is categorized into \emph{text} that contains code and constant data, which is typically stored in ROM, and \emph{data} that is stored in RAM.

From Table~\ref{tab:dSPACE_results}, MIP-DM is real-time feasible using the proposed \software{BB-ASIPM} solver for each of the $8$~simulation scenarios on both the dSPACE Scalexio and MABX-III units, as the worst-case computation time is below the sampling time of $\Ts = 1$~s at each time step. 
More specifically, considering all test scenarios, the computation times on the dSPACE Scalexio are always below $200$~ms, below $100$~ms $99$\% of the times, and the average is only $17.3$~ms. On MABX-III, the computation times are \change{always} below $800$~ms, below $400$~ms $99$\% of the times, and the average is only $76.3$~ms. 
The total memory usage is approximately $18$~MB on Scalexio and $16.1$~MB on MABX-III, due to the different compilers. 
As expected, for each test scenario, Table~\ref{tab:dSPACE_results} shows that the number of iterations on Scalexio and MABX-III is identical.

\begin{table*}[t]
\normalsize
\parbox{\textwidth}{
	\caption{Average and worst-case computation times, number of B\&B iterations, total number of ASIPM iterations, and memory footprint of the embedded \software{BB-ASIPM} solver on the dSPACE Scalexio and on the dSPACE MABX-III, for hardware-in-the-loop simulations of the MIP-DM method for the $8$~scenarios in Figure~\ref{fig:test_scenarios}.}
	\label{tab:dSPACE_results}
}
\centering
\setlength{\tabcolsep}{0.6em}
\begin{tabular}{ l | c c | c c | c c | c c | c c | c c }
	\toprule
	\multicolumn{1}{l}{} & \multicolumn{4}{c}{\software{BB-ASIPM} solver} & \multicolumn{4}{c}{\software{BB-ASIPM} on {\bf dSPACE Scalexio}} & \multicolumn{4}{c}{\software{BB-ASIPM} on {\bf dSPACE MABX-III}} \\
	\multicolumn{1}{l}{} & \multicolumn{2}{c}{B\&B iters} & \multicolumn{2}{c}{ASIPM iters} & \multicolumn{2}{c}{CPU time~[ms]} & \multicolumn{2}{c}{Memory~[KB]} & \multicolumn{2}{c}{CPU time~[ms]} & \multicolumn{2}{c}{Memory~[KB]} \\
	& mean & max & mean & max & mean & max & text & data & mean & max & text & data \\
	\midrule
	\case{1} see Fig.~\ref{fig:test_scenario1}  &  $6.3$  &  $65$  &  $56.1$  &  $495$ & $22.7$  &  $185.2$ & 170 & 13633 & $99.9$  &  $774.0$ & 132 & 12217 \\ 
	\case{2} see Fig.~\ref{fig:test_scenario2}  &  $4.7$  &  $35$  &  $41.2$  &  $199$ & $14.0$  &  $56.0$ & 167 & 12639 & $63.4$  &  $240.1$  & 132 & 11407 \\ 
	\case{3} see Fig.~\ref{fig:test_scenario3}  &  $4.3$  &  $25$  &  $42.2$  &  $153$ & $16.9$  &  $60.9$ & 170 & 18015 & $74.6$  &  $252.8$ & 133 & 16209  \\ 
	\case{4} see Fig.~\ref{fig:test_scenario4}  &  $7.5$  &  $39$  &  $66.4$  &  $353$ & $27.5$  &  $131.7$ & 177 & 18258 & $119.6$  &  $542.8$ & 134 & 16343  \\ 
	\case{5} see Fig.~\ref{fig:test_scenario5}  &  $4.1$  &  $27$  &  $37.6$  &  $241$ & $13.2$  &  $71.8$ & 170 & 14391 & $59.2$  &  $300.2$ & 132 & 12975  \\ 
	\case{6} see Fig.~\ref{fig:test_scenario6}  &  $7.9$  &  $45$  &  $73.4$  &  $351$ & $18.8$  &  $102.2$ & 170 & 11896 & $83.9$  &  $425.6$ & 133 & 10897  \\ 
	\case{7} see Fig.~\ref{fig:test_scenario7}  &  $3.8$  &  $37$  &  $41.1$  &  $204$ & $12.5$  &  $61.4$ & 170 & 12364 & $54.6$  &  $251.5$ & 133 & 11283  \\ 
	\case{8} see Fig.~\ref{fig:test_scenario8}  &  $4.4$  &  $37$  &  $42.3$  &  $261$ & $12.8$  &  $74.1$ & 170 & 14391 & $55.0$  &  $308.1$ & 132 & 12975  \\
	\bottomrule
\end{tabular}
\end{table*}

\section{Experimental Results of MIP-DM and NMPC on Small-scale Automated Vehicles}
\label{sec:results}

Next, we validate the performance of MIP-DM on experiments with small-scale vehicles, using ROS and an Optitrack motion-capture system~\cite{Berntorp2018}. First, we briefly present the hardware and software setup, then we describe the integration of MIP-DM with a nonlinear MPC~(NMPC) for reference tracking, and finally we show the experiment results.

\subsection{Hardware Setup and Software Implementation}

\begin{figure*}
	\centering
	\begin{minipage}{.3\linewidth}%
		\begin{subfigure}[b]{\textwidth}
			\centering
			\includegraphics[width=0.45\textwidth,trim={0 15cm 0 20cm},clip]{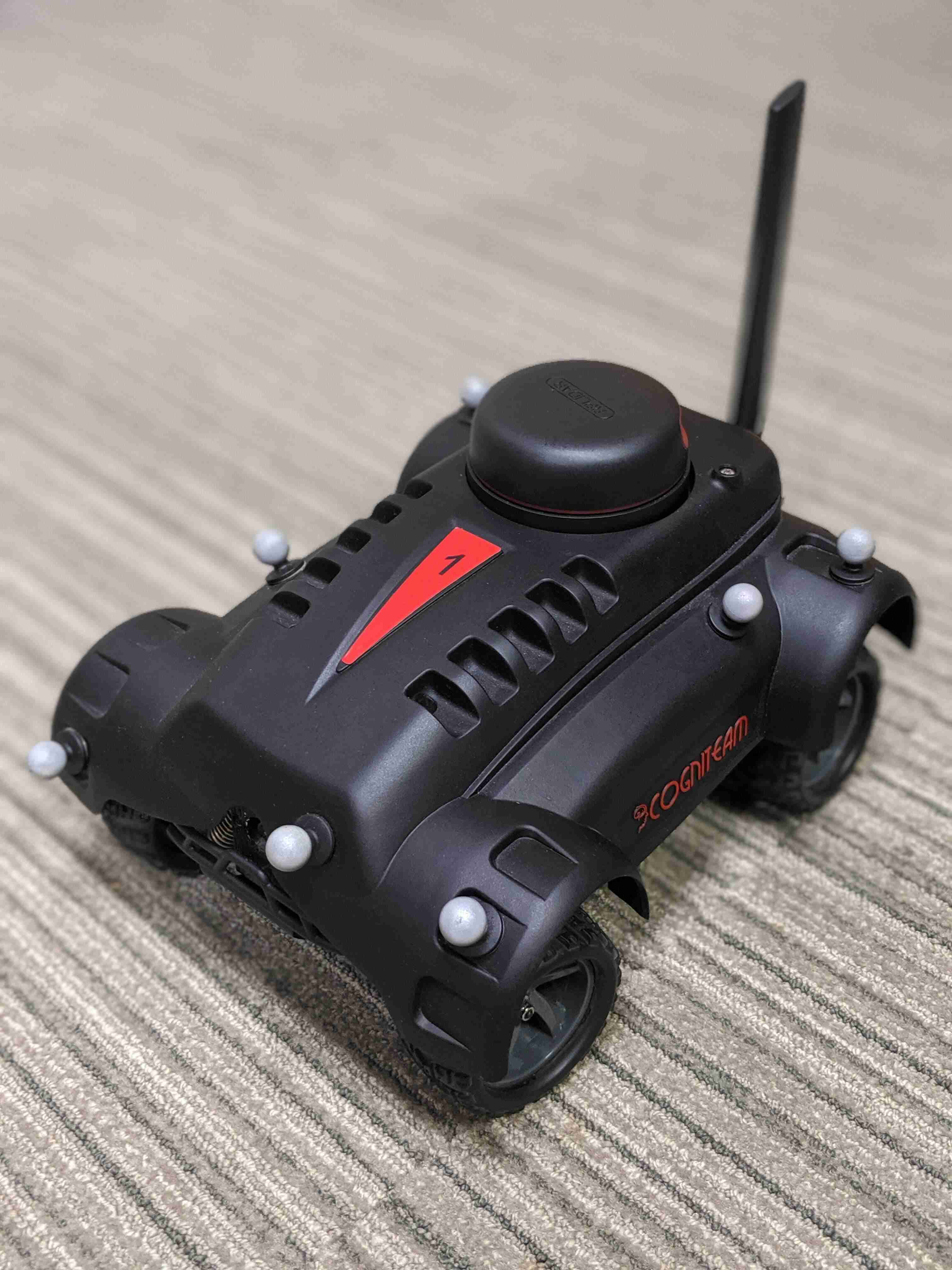}
			\captionsetup{font=normalsize,labelfont={sf}}
			\caption[]%
			{\normalsize Small-scale autonomous vehicle.}    
			\label{fig:hamster}
		\end{subfigure}
		\vskip\baselineskip\vspace{2mm}
		\begin{subfigure}[b]{\textwidth}   
			\centering 
			\includegraphics[width=0.45\textwidth,trim={10cm 22cm 10cm 30cm},clip]{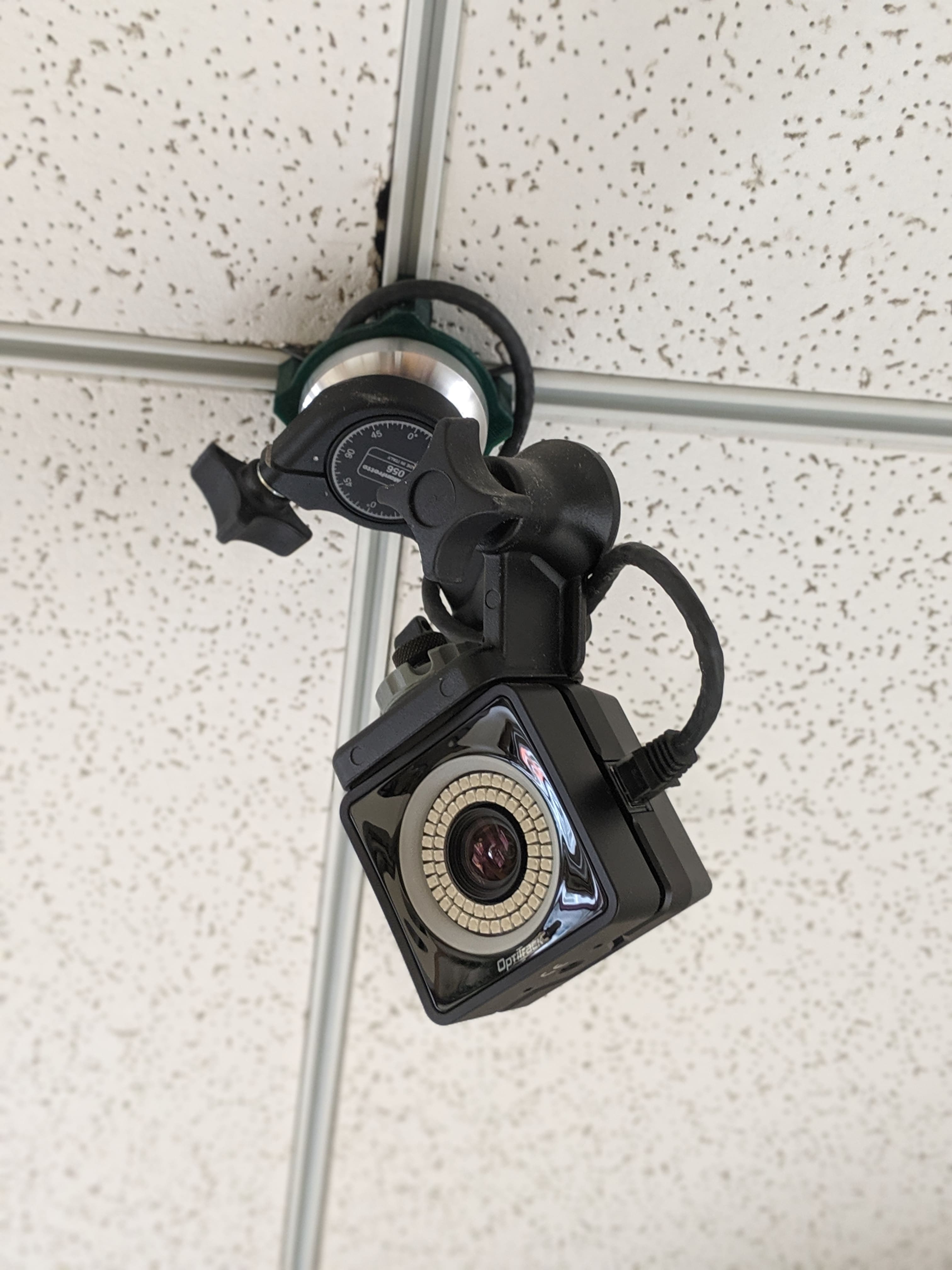}
			\captionsetup{font=normalsize,labelfont={sf}}
			\caption[]%
			{\normalsize OptiTrack motion capture camera.}    
			\label{fig:optitrack}
		\end{subfigure}
	\end{minipage}
\begin{minipage}{.03\linewidth}%
\hspace{1mm}
\end{minipage}
	\begin{minipage}{.6\linewidth}%
		\begin{subfigure}[b]{\textwidth}  
			\centering 
			\includegraphics[trim={0 0 0 0},clip,width=1\textwidth]{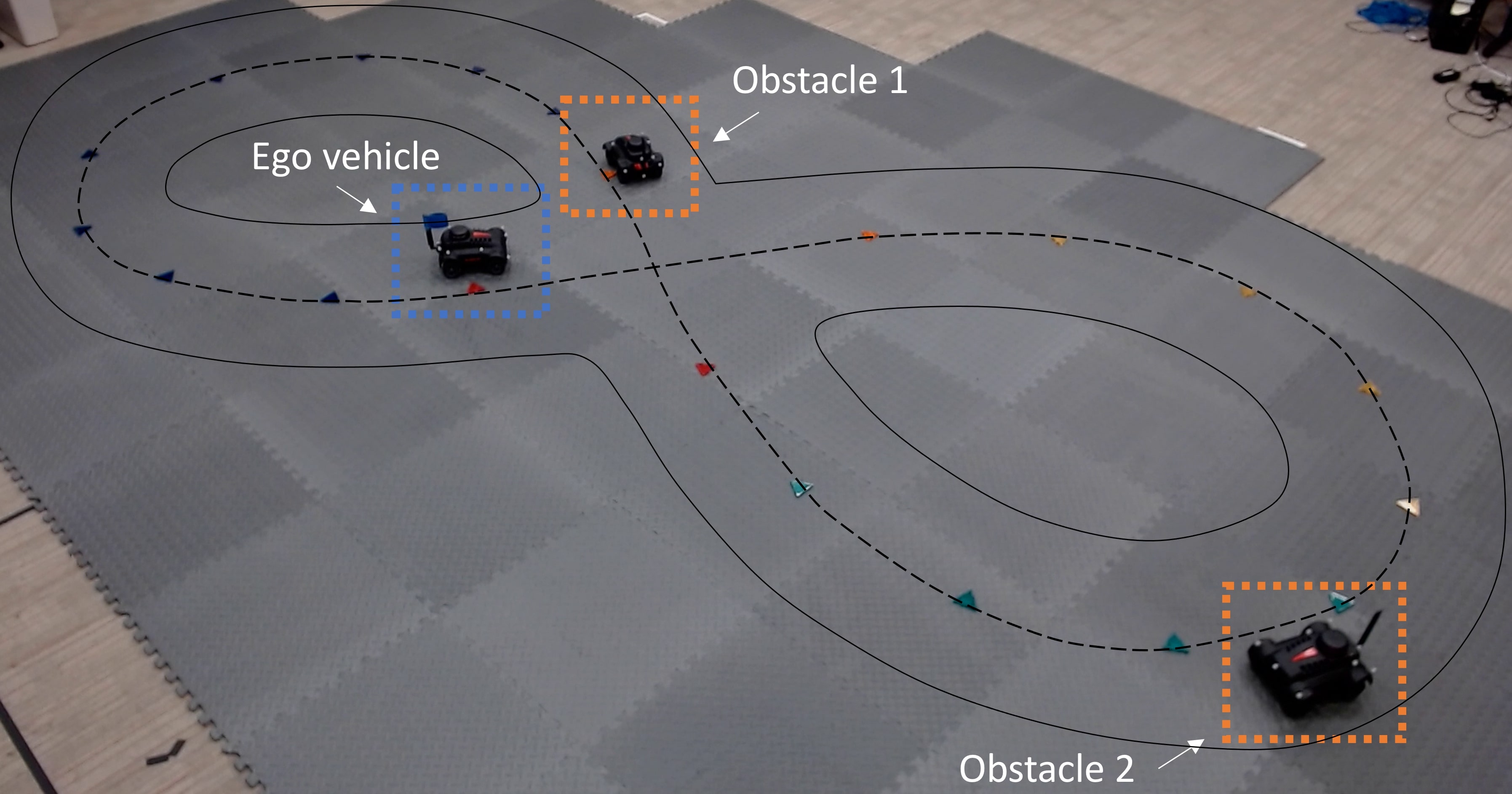}
			\captionsetup{font=normalsize,labelfont={sf}}
			\caption[]%
			{\normalsize Experiments using three small-scale vehicles: the ego vehicle (blue flag) executing the MIP-DM and NMPC, and two obstacles (no flag).}
			\label{fig:track}
		\end{subfigure}
	\end{minipage}
	\caption[ ]{Experimental testbench that consists of small-scale automated vehicles~(a) with on-board sensors, and an OptiTrack motion capture system~(b). Track and snapshot of the positions of the ego vehicle and of the two obstacle vehicles~(c).
	}
	\label{fig:hamster_setup}
\end{figure*}

The hardware setup is illustrated in Figure~\ref{fig:hamster_setup}. It includes a \emph{Hamster}~\cite{hamster} vehicle in Fig.~\ref{fig:hamster}, a $25\times20$~cm mobile robot \change{with electric steering and electric motor speed control}.
The robot is equipped with sensors 
such as a rotating $360$~deg Lidar, an inertial measurement unit, GPS receiver, HD camera, and motor encoders. 
It has Ackermann steering and its kinematic behavior \change{emulates} that of a regular vehicle. 
To evaluate the performance of the automated driving system, we use an Optitrack motion-capture system~\cite{optitrack}, see Fig.~\ref{fig:optitrack}, to obtain position and orientation measurements for each of the Hamster vehicles. 
Depending on the environment and quality of the calibration, the Optitrack system can track the position for each of the Hamster vehicles within $1$~cm and with an orientation error of less than $3$~deg.

Our experimental setup consists of three vehicles driving on a two-lane track shaped as a figure eight, resulting in a traffic intersection as shown in Fig.~\ref{fig:track}. Two Hamsters are designated as \emph{obstacles}, executing a standard PID controller that tracks the center line of the current lane. A traffic intersection coordinator
forces each of the obstacles to stop in front of the intersection for at least three seconds before continuing the execution of the PID lane keeping controller when the intersection is free. The third Hamster is the ego vehicle that is controlled by the multi-layer control architecture shown in Figure~\ref{fig:hamster_architecture}, i.e., the proposed MIP-DM method in combination with an NMPC for reference tracking as described in the next section.
Each of the components in Fig.~\ref{fig:hamster_architecture} is executed in a separate ROS node on a single dedicated desktop computer\footnote{The desktop for vehicle experiments is equipped with an Intel i7-6900K CPU @ 3.20GHz~$\times 8$ processor, 64 GB RAM, and Ubuntu 16.04~LTS.}. 

\begin{figure}[t]
	\centerline{\hbox{
			\includegraphics[width=0.48\textwidth,trim={5mm 0cm 0cm 0cm},clip]{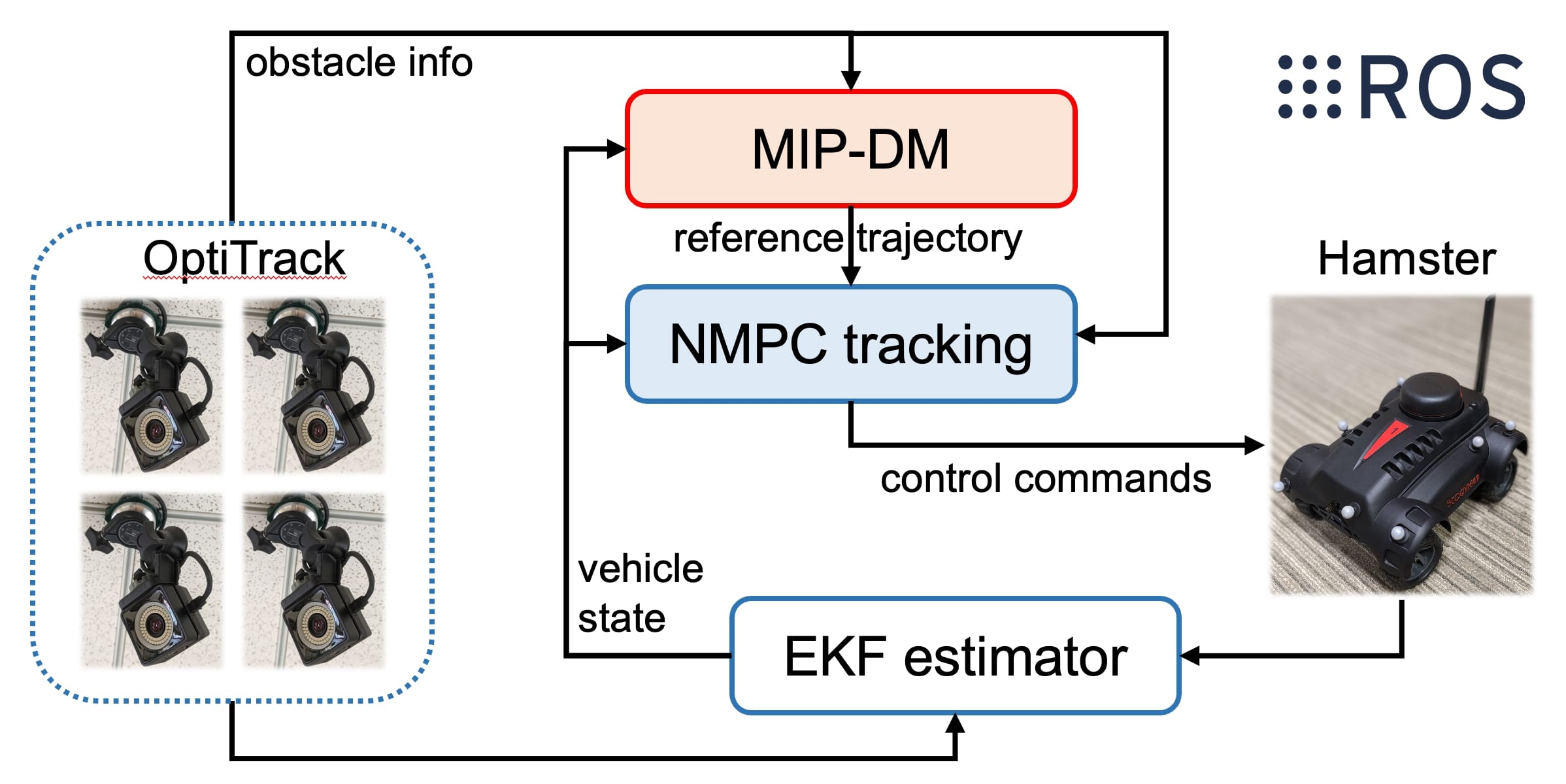}}}		
	\caption{Multi-layer control architecture with MIP-DM, NMPC controller, and EKF state estimator using measurements from the Optitrack system and on-board sensors of the Hamster.}
	\label{fig:hamster_architecture}
\end{figure}

\subsection{Integration of MIP-DM and NMPC Tracking Controller}

We briefly introduce the NMPC that executes the motion plan of the MIP-DM, see Fig.~\ref{fig:hamster_architecture}. Based on the vehicle model in~\eqref{eq:kinematics}, the MIP-DM reference trajectory in curvilinear coordinates is $\begin{bmatrix} \ps(i), \pn(i), \vs(i) \end{bmatrix}^\top$ for $i \in \Z_{0}^{N}$,
which \change{is} transformed to an absolute coordinate frame $(\pX, \pY)$ as in Fig.~\ref{fig:curvlinear}. Given an approximation of the heading angle $\psi(i) \approx \arctan\left(\frac{\pY(i+1) - \pY(i)}{\pX(i+1) - \pX(i)}\right)$, we obtain a reference trajectory $\begin{bmatrix} \pX(i), \pY(i), \psi(i), v(i) \end{bmatrix}^\top$ for $i \in \Z_{0}^{N}$. Similar to~\cite{Quirynen2020,PRESAS}, we use a $3^\text{rd}$ order polynomial approximation, resulting in $\yout^\mathrm{ref}(\tau) = \begin{bmatrix} \pX^\mathrm{ref}(\tau), \pY^\mathrm{ref}(\tau), \psi^\mathrm{ref}(\tau), v^\mathrm{ref}(\tau) \end{bmatrix}^\top$ for $0 \le \tau \le T^{\mathrm{mpc}}$, where $T^{\mathrm{mpc}}$ is the NMPC horizon length.

\change{For the NMPC prediction model,} we use the nonlinear kinematic model~\eqref{eq:nonl_kinematics} with additional actuation dynamics as in~\cite{Quirynen2020}, resulting in the continuous time dynamics
\begin{subequations}
	\begin{alignat}{5}
		\dot{p}_{\mathrm{X}} &= v\, \text{cos}(\psi+\beta), &\quad \dot{p}_{\mathrm{Y}} &= v\, \text{sin}(\psi+\beta), \\
		\dot{\psi} &= v\, \frac{\text{cos}(\beta)}{L} \text{tan}(\delta_{\mathrm{f}}), &\quad \dot{\delta}_{\mathrm{f}} &= \frac{1}{t_{\mathrm{d}}}(\delta + \change{\delta_{\mathrm{o}}} - \delta_{\mathrm{f}}) \\
	    \dot{v} &= u_1, &\quad  \dot{\delta} &= u_2,
	\end{alignat} \label{eq:ct_kinematics}%
\end{subequations}
where $p_{\mathrm{X}},p_{\mathrm{Y}}$ is the longitudinal and lateral position in the world frame, $\psi$ is the heading angle and $\dot \psi$ the heading rate, $v$ is the longitudinal velocity, $\delta$ and $\delta_{\mathrm{f}}$ are the \change{commanded} and actual front wheel steering angle, respectively, and $L, \beta$ are defined as in~\eqref{eq:nonl_kinematics}. 
First order front steering dynamics are included in~\eqref{eq:ct_kinematics} for the steering actuation response. In addition, we estimate the offset value $\change{\delta_{\mathrm{o}}}$ for the steering angle online using an extended Kalman filter~(EKF), \change{which also compensates for unmodeled disturbances,} see Fig.~\ref{fig:hamster_architecture}. The inputs $u_1,u_2$ are the acceleration and steering rate, respectively. 

\change{At each control time step $t$, the NMPC solves}
\begin{subequations} \label{eq:NMPC}%
	\begin{alignat}{5}
		\hspace{-3mm}\underset{X,\,U}{\text{min}} \quad &\frac{1}{2}  \sum_{i=0}^{N^{\mathrm{mpc}}} \Vert \yout(k) - \yout^{\mathrm{ref}}(t_k) \Vert_{Q}^2 + \Vert e_\mathrm{Y}(k) \Vert_{W}^2 && \\
		& \hspace{6mm}+ \Vert u(k) \Vert_{R}^2 + r_{\nu}\, \nu(k) && \label{NMPC:obj}\\
		\hspace{-3mm}\text{s.t.} \quad\; 
		& x(0) = \change{\hat{x}_t}, 	\label{NMPC:initial}\\
		& x(k+1) = f_k\left(x(k), u(k) \right), && \hspace{-3em} \forall k \in \Z_{0}^{N^{\mathrm{mpc}}-1}, \label{NMPC:dyn}\\
		& \ubar{c}_k \le c_k\left(x(k), u(k) \right) \le \bar{c}_k, && \hspace{-3em} \forall k \in \Z_{0}^{N^{\mathrm{mpc}}}, \label{NMPC:ineq}
	\end{alignat}
\end{subequations}
\change{where the} $N^{\mathrm{mpc}}$ control intervals are defined by an equidistant grid of time points $t_k = k \frac{T^{\mathrm{mpc}}}{N^{\mathrm{mpc}}}$ for $k \in \Z_{0}^{N^{\mathrm{mpc}}}$ over the NMPC horizon, $\change{\hat{x}_t}$ is the current state estimate from the EKF \change{at time $t$}, and the constraints in~\eqref{NMPC:dyn} are a discretization of the continuous time dynamics in~\eqref{eq:ct_kinematics} using a $4^\text{th}$~order Runge-Kutta method. The NMPC tracking objective is formulated as a weighted least squares cost of the error between the output $\yout(k)$ and the reference trajectory $\yout^\mathrm{ref}(\tau)$, the path error $e_\mathrm{Y}(k) = \text{cos}(\psi^\mathrm{ref}(t_k))\left(p_{Y}(k)-p_{Y}^\mathrm{ref}(t_k)\right) - \text{sin}(\psi^\mathrm{ref}(t_k))\left(p_{X}(k)-p_{X}^\mathrm{ref}(t_k)\right)$, the squared inputs and an $L_1$ penalty on the slack variables $\nu(k)$.
We introduce a nonnegative slack variable $\nu(k) \ge 0$ for implementing the $L_1$ penalty, and the weight $r_{\nu} \gg 0$ is chosen sufficiently large to ensure that $\nu(k)=0$ when a feasible solution exists~\cite{Fletcher1987}.

Constraints~\eqref{NMPC:ineq} include hard bounds on the control inputs and soft constraints for limiting the distance to the reference trajectory, the velocity and the steering angle
\begin{subequations}
	\begin{alignat}{5}
		-\bar{e}_Y &\le \; e_{Y} + s, \quad
		& -\bar{\delta}_f &\le \; \delta_f + s, \quad
		& -\bar{v} &\le \; v + s, \\
		e_{Y} &\le \bar{e}_Y + s, \quad
		& \delta_f &\le \bar{\delta}_f + s,\quad
		& v &\le \bar{v} + s,\\
		-\bar{\dot{\delta}} &\le \; \dot{\delta} \le 
		\bar{\dot{\delta}}, \quad
		&-\bar{\dot{v}} &\le \; \dot{v} \le 
		\bar{\dot{v}}.
	\end{alignat}\label{eq:softConstr}%
\end{subequations}
\change{In NMPC, obstacle avoidance is enforced by ellipsoidal constraints that approximate the rectangular collision region for each obstacle in the MIP-DM, see Fig.~\ref{fig:obstacle_avoidance},}
\begin{equation}
	1 \le \left(\frac{\delta_{x,j}(k)}{a_{x,j}}\right)^2 + \left(\frac{\delta_{y,j}(k)}{a_{y,j}}\right)^2, \label{eq:ellipsoidal}
\end{equation}
where $\begin{bmatrix}\delta_{x,j} \\ \delta_{y,j}\end{bmatrix} = R(o_{\psi,j})^\top\begin{bmatrix}p_{\mathrm{X}} - o_{\mathrm{X},j} \\ p_{\mathrm{Y}} - o_{\mathrm{Y},j}\end{bmatrix}$ is the rotated distance, $(o_{\mathrm{X},j},o_{\mathrm{Y},j},o_{\psi,j})$ is the obstacle's pose, and $(a_{x,j},a_{y,j})$ are the lengths of the principal semi-axes of the ellipsoid that ensure a safety margin around each obstacle. 

The nonlinear OCP~\eqref{eq:NMPC} includes $\nx=6$ states, $\nU=3$ control inputs and $N^{\mathrm{mpc}}=80$ control intervals with a sampling period of $\Ts^{\mathrm{mpc}} = 25$~ms over a $T^{\mathrm{mpc}}=2$~s horizon length. The NMPC controller is implemented with a sampling frequency of $40$~Hz, using the real-time iteration~(RTI) algorithm~\cite{Gros2016} in the \software{ACADO} code generation tool~\cite{Quirynen2014a} and the \software{PRESAS} QP solver~\cite{PRESAS}. 
\change{The sampling period of MIP-DM is reduced with respect to that of Section~\ref{sec:simulation} due to the scaling of the vehicles. MIP-DM executes with a sampling period of $\Ts^{\mathrm{mip}} = 0.3$~s and horizon length $N^{\mathrm{mip}}=15$.} 

\subsection{Experimental Results using Small-scale Vehicles}

Based on the MIP-DM in Section~\ref{sec:MIQP_form}, the capabilities of the ego vehicle include lane selection, lane change execution, swaying maneuvers, queuing behavior and stopping / crossing at the traffic intersection. Based on the zone constraints in the MIP~(see Section~\ref{sec:zones}), we implement a traffic rule that the ego vehicle is only allowed to make lane changes in the bottom right loop of the figure eight track~(see Fig.~\ref{fig:track}).

Figure~\ref{fig:visualization_figs} shows four snapshots of the experiment. The left side of each subfigure shows the location of the ego~(blue) and two obstacles~(red) on the eight shaped track, the safety ellipsoid around each obstacle~(dashed red line), the NMPC predicted trajectory~(blue plus markers) and the MIP-DM reference trajectory~(magenta circles). The bottom right side of each subfigure in Fig.~\ref{fig:visualization_figs} illustrates the proposed MIP-DM, i.e., it shows the two-lane road in curvilinear coordinates, the location of the ego~(blue), two obstacles~(red), the traffic intersection~(purple), and the MIP solution trajectory~(blue solid circles) over a $T^{\mathrm{mip}}=4.5$~s horizon length. For each obstacle, the dark red~(or dark purple) \change{region} represents the physical shape of the obstacle, while the larger shaded area corresponds to the avoidance constraints in the MIP-DM. A sequence of larger shaded areas is shown for each obstacle based on a prediction of the obstacle behavior over the MIP-DM horizon. The top right side of each subfigure in Fig.~\ref{fig:visualization_figs} shows the steering angle and velocity command in the NMPC control input trajectory over a $T^{\mathrm{mpc}}=2$~s horizon.

Fig.~\ref{fig:visualization_fig1} shows the trajectories for MIP-DM and NMPC at $26$~s in the experiment, demonstrating the ego vehicle stopping at the traffic intersection. After the obstacle~(Hamster~$3$) finishes crossing the intersection, the ego continues by crossing the intersection at $33$~s in the experiment. Fig.~\ref{fig:visualization_fig2} shows the trajectories at $61$~s, demonstrating the ego changing lane and overtaking a slower obstacle to achieve the desired velocity of $0.4$~m/s. 
Fig.~\ref{fig:visualization_fig3} shows the trajectories at $69$~s, demonstrating the ego changing lane back to the preferred lane after overtaking the slower obstacle. Finally, Fig.~\ref{fig:visualization_fig4} shows the trajectories at $183$~s in the experiment, demonstrating the ego queuing behind a slower obstacle because overtaking is not allowed in the top left loop of the figure eight track.

\begin{figure*}
\centering
\hspace{-1cm}
\begin{minipage}{0.45\linewidth}%
	\begin{subfigure}[b]{\textwidth}
		\centering
		\includegraphics[width=1.15\textwidth,trim={2cm 6cm 0 2cm},clip]{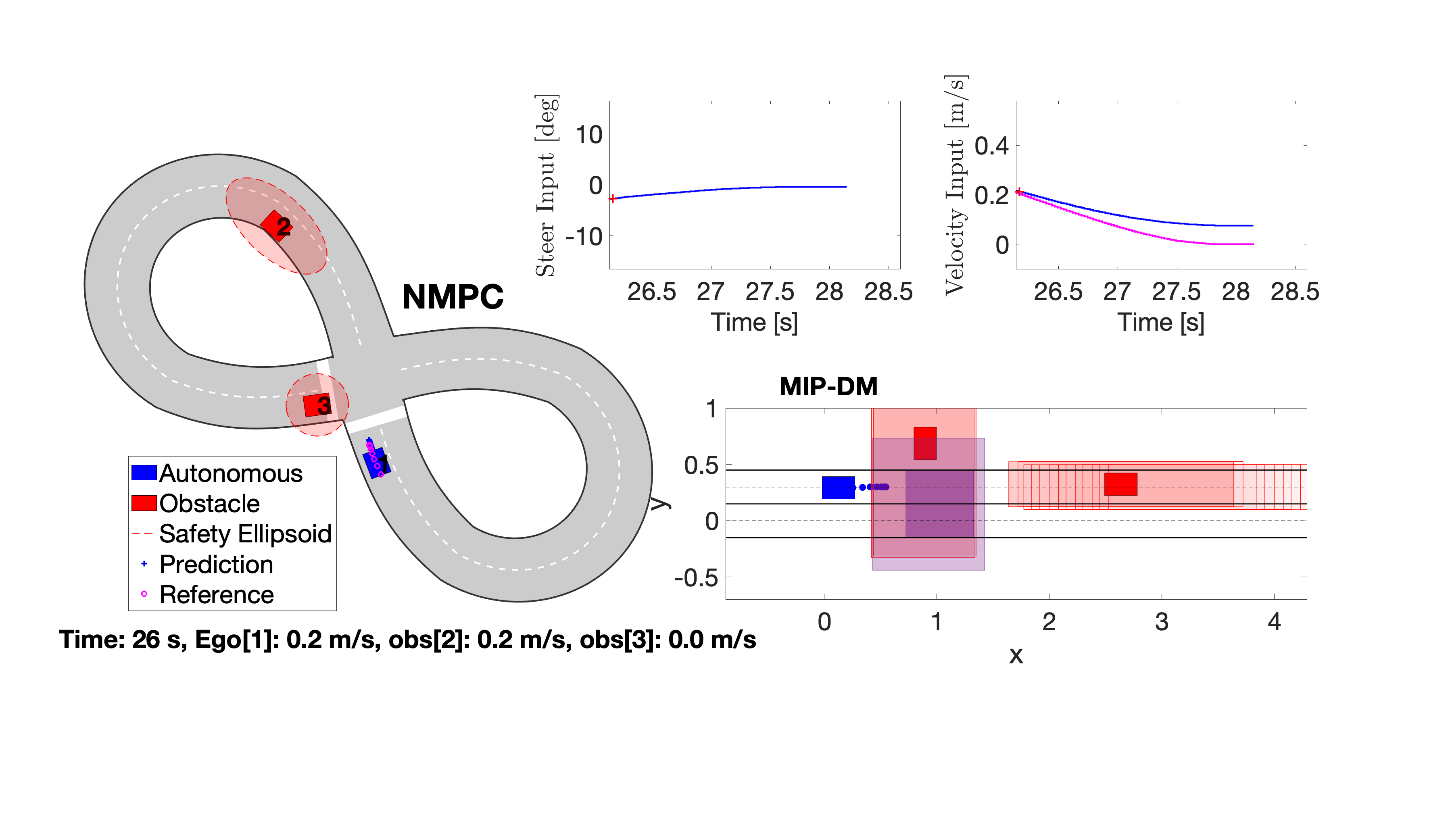}
		\captionsetup{font=normalsize,labelfont={sf}}
		\caption[]%
		{\normalsize Trajectories for MIP-DM and NMPC at $26$~s of experiment: ego vehicle stopping at traffic intersection.}
		\label{fig:visualization_fig1}
	\end{subfigure}
	\vskip\baselineskip\vspace{2mm}
	\begin{subfigure}[b]{\textwidth}
		\centering
		\includegraphics[width=1.15\textwidth,trim={2cm 6cm 0 2cm},clip]{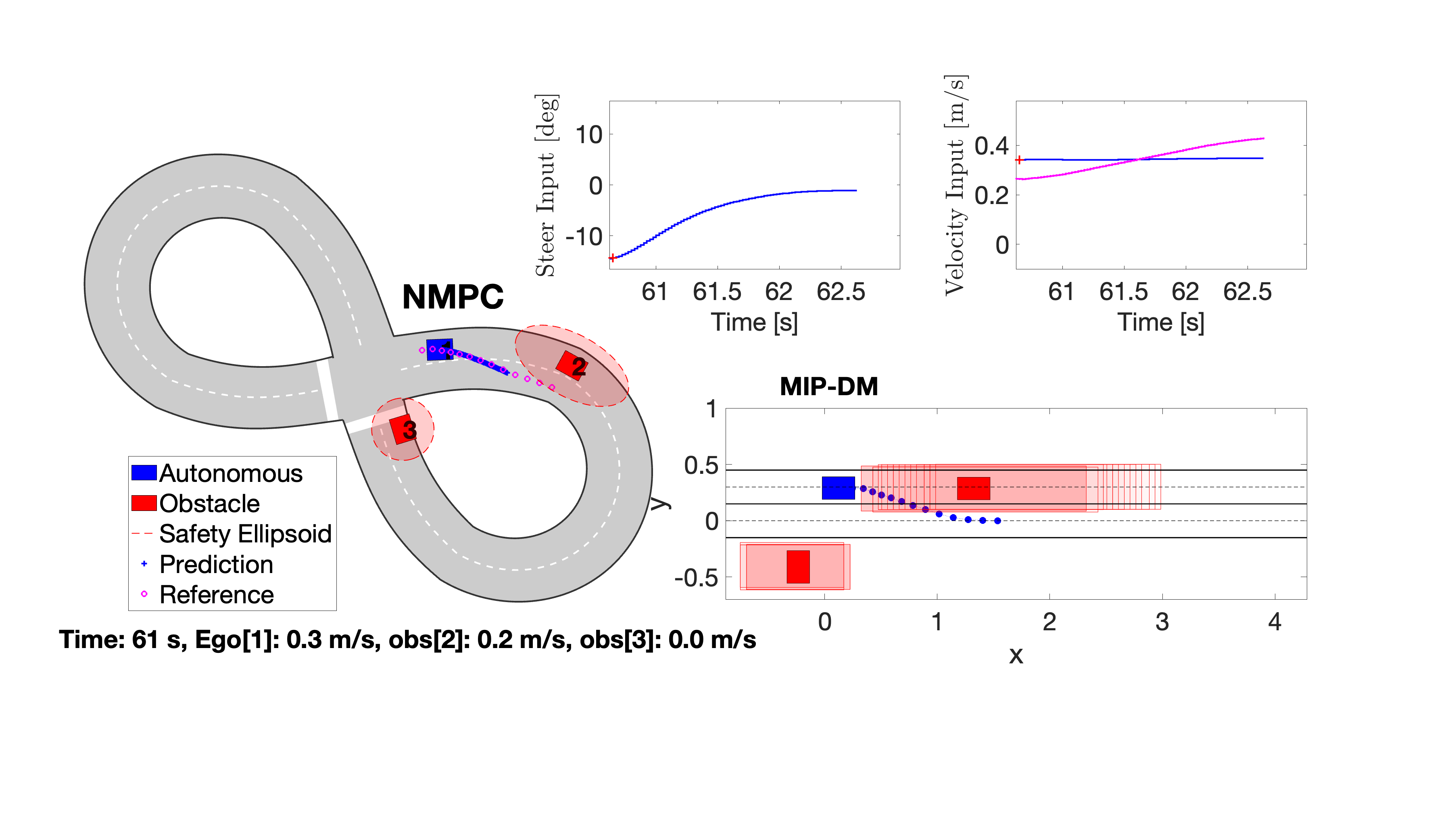}
		\captionsetup{font=normalsize,labelfont={sf}}
		\caption[]%
		{\normalsize Trajectories for MIP-DM and NMPC at $61$~s of experiment: ego vehicle overtaking slower obstacle to achieve desired velocity.}
		\label{fig:visualization_fig2}
	\end{subfigure}
\end{minipage}
\begin{minipage}{0.05\linewidth}%
	\hspace{2mm}
\end{minipage}
\begin{minipage}{0.45\linewidth}%
\begin{subfigure}[b]{\textwidth}
	\centering
	\includegraphics[width=1.15\textwidth,trim={2cm 6cm 0 2cm},clip]{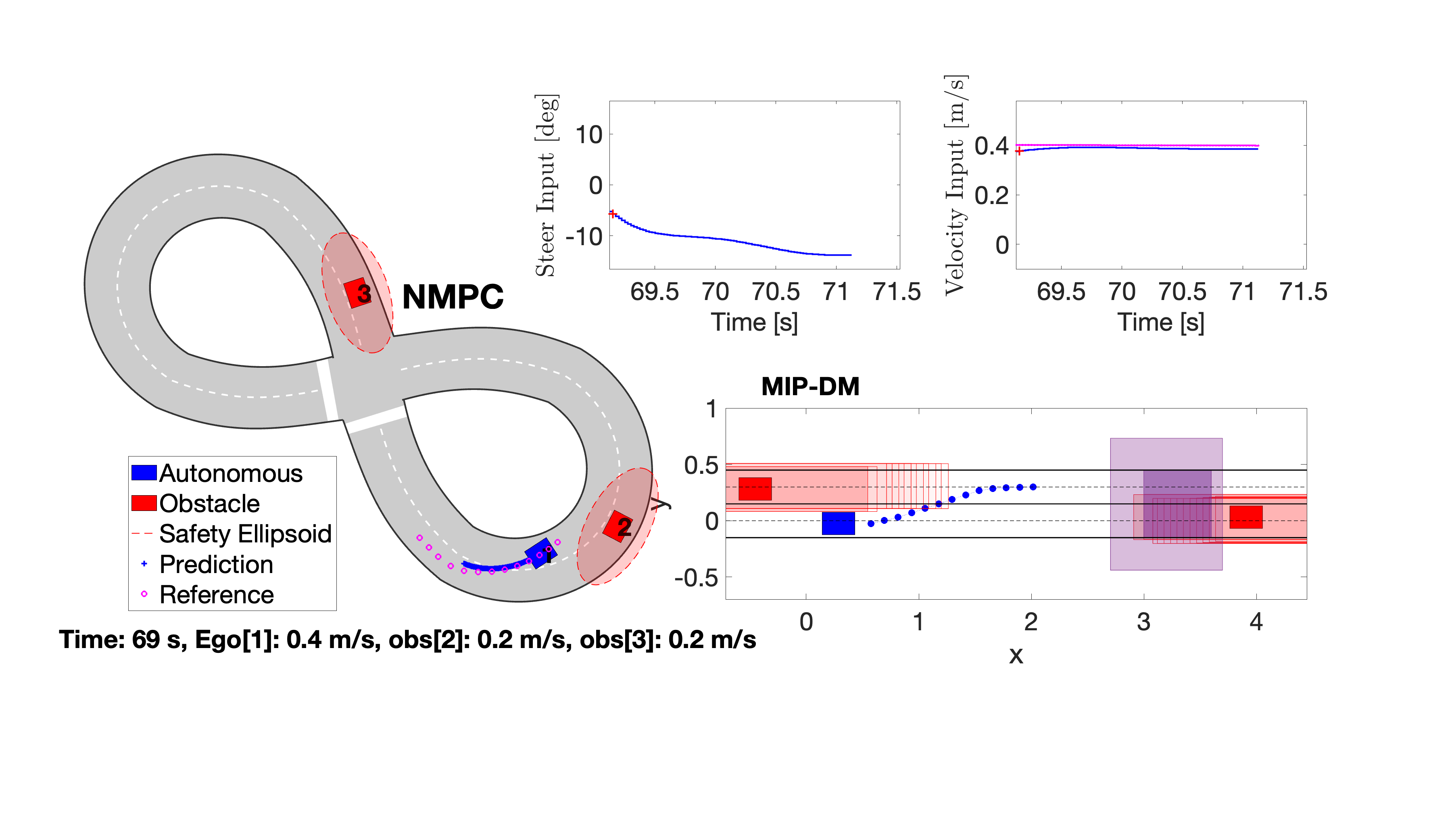}
	\captionsetup{font=normalsize,labelfont={sf}}
	\caption[]%
	{\normalsize Trajectories for MIP-DM and NMPC at $69$~s of experiment: ego vehicle returning to preferred lane after overtaking slower obstacle.}
	\label{fig:visualization_fig3}
\end{subfigure}
\vskip\baselineskip
\begin{subfigure}[b]{\textwidth}
	\centering
	\includegraphics[width=1.15\textwidth,trim={2cm 6cm 0 3cm},clip]{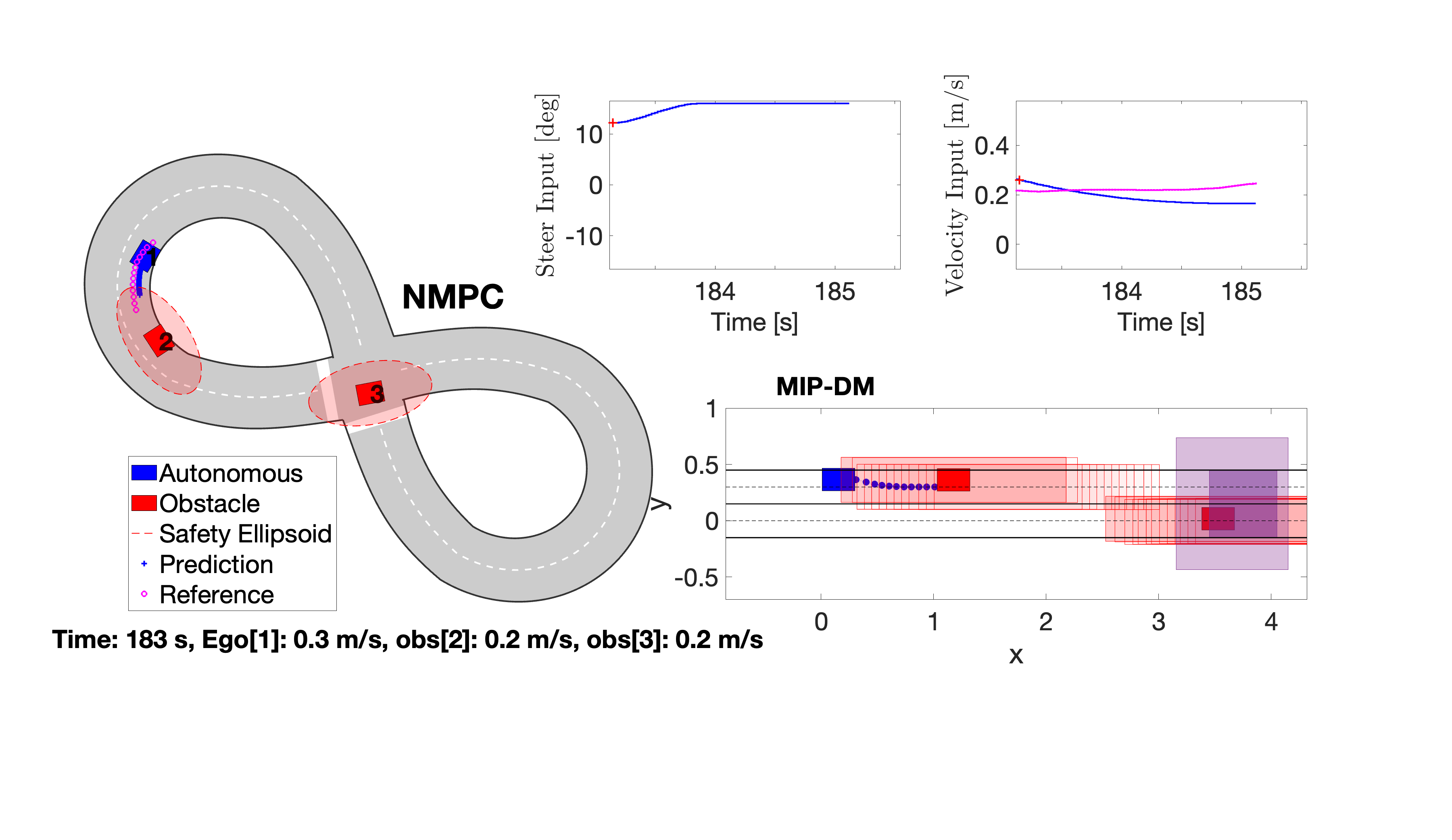}
	\captionsetup{font=normalsize,labelfont={sf}}
	\caption[]%
	{\normalsize Trajectories for MIP-DM and NMPC at $183$~s of experiment: ego vehicle slowing down behind slower obstacle because overtaking is not allowed.}
	\label{fig:visualization_fig4}
\end{subfigure}
\end{minipage}
\hspace{-1cm}
\caption[ ]{Illustration of predicted trajectories of MIP-DM~($\Ts^{\mathrm{mip}}=0.3$~s), and NMPC~($\Ts^{\mathrm{mpc}}=0.025$~s) tracking the MIP-DM reference, at certain steps of small-scale vehicle experiments. \change{
The left side of each subfigure shows the eight shaped track, the ego~(blue) and two obstacles~(red), safety ellipsoid around each obstacle~(dashed red line), NMPC predicted trajectory~(blue plus markers) and MIP-DM reference~(magenta circles). The bottom right side of each subfigure shows the ego~(blue), two obstacles~(red), traffic intersection~(purple), and MIP-DM solution~(blue solid circles) in curvilinear coordinates, and the top right side shows the NMPC control input trajectory.}
A video is available at: \url{https://youtu.be/FyaGRZvuqmA}.}
\label{fig:visualization_figs}
\end{figure*}

Figure~\ref{fig:hamster_trace} shows the trace of ego positions~(in blue) during the $200$~s experiment, and each of the locations where the ego vehicle came to a full stop are highlighted by red dots. The ego vehicle consistently stops at a desired safety distance from the intersection before crossing. The one red dot away from the intersection is due to the queuing behavior in Fig.~\ref{fig:visualization_fig4}, where the ego stops behind an obstacle at the intersection. In addition, Fig.~\ref{fig:hamster_trace} confirms that the ego vehicle only makes lane changes in the bottom right loop of the track, demonstrating the zone-dependent traffic rules in Section~\ref{sec:zones}. Finally, Figure~\ref{fig:CPU_times} shows the CPU times for the \software{BB-ASIPM} solver to implement the MIP-DM during the $200$~s experiment. The computation times are always below $120$~ms and therefore real-time feasible, due to the sampling period of $\Ts^{\mathrm{mip}} = 300$~ms. 

\begin{figure}[t]
	\centerline{\hbox{
			\includegraphics[width=0.38\textwidth,trim={13cm 5cm 11cm 2cm},clip]{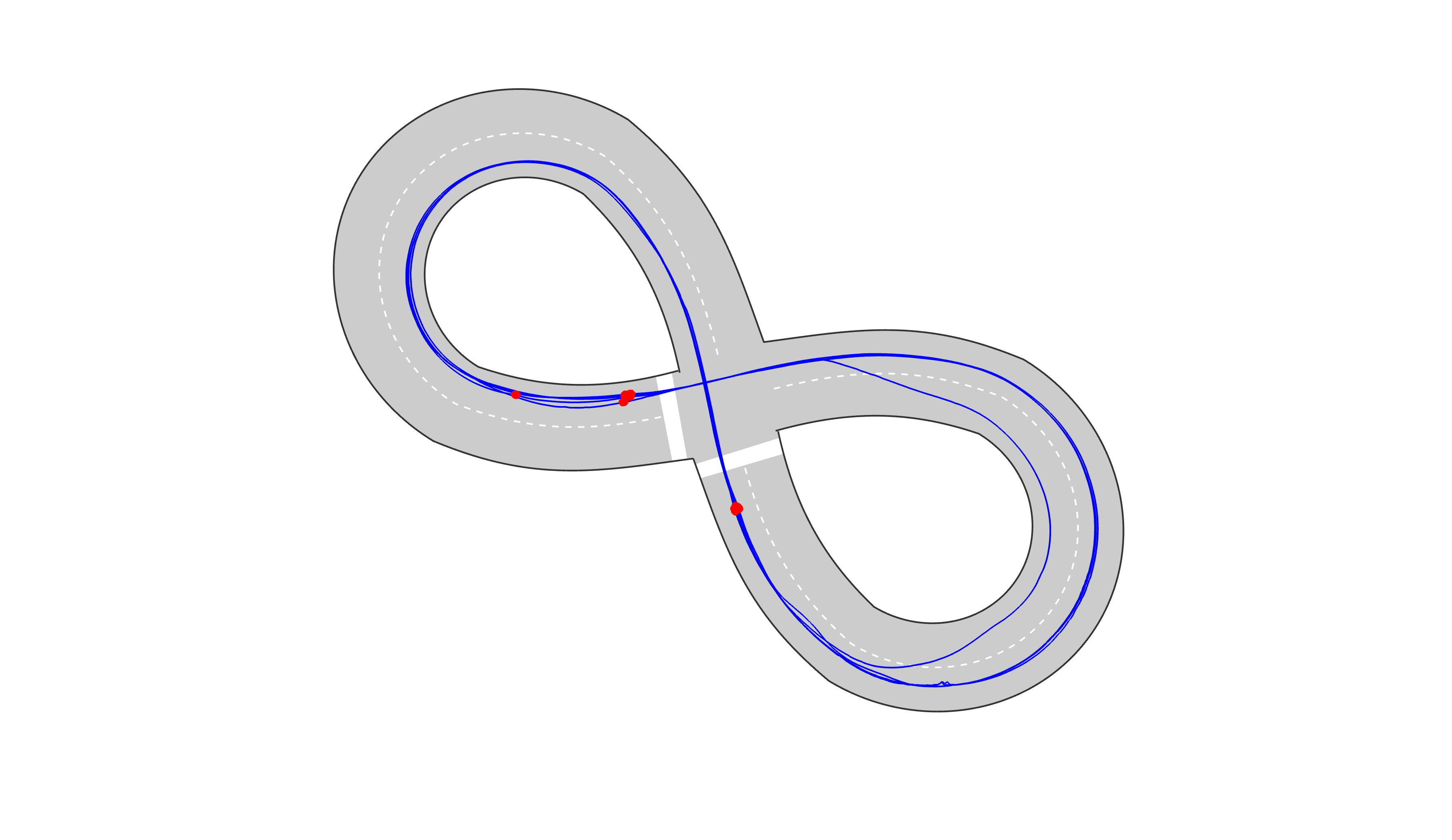}}}		
	\caption{Trace of ego vehicle positions during experiments in Fig.~\ref{fig:visualization_figs}: red dots indicate positions at which the ego stopped, either at the intersection or queuing behind an obstacle.}
	\label{fig:hamster_trace}
\end{figure}

\begin{figure}[t]
	\centerline{\hbox{
			\includegraphics[width=0.5\textwidth,trim={2cm 0cm 1cm 0cm},clip]{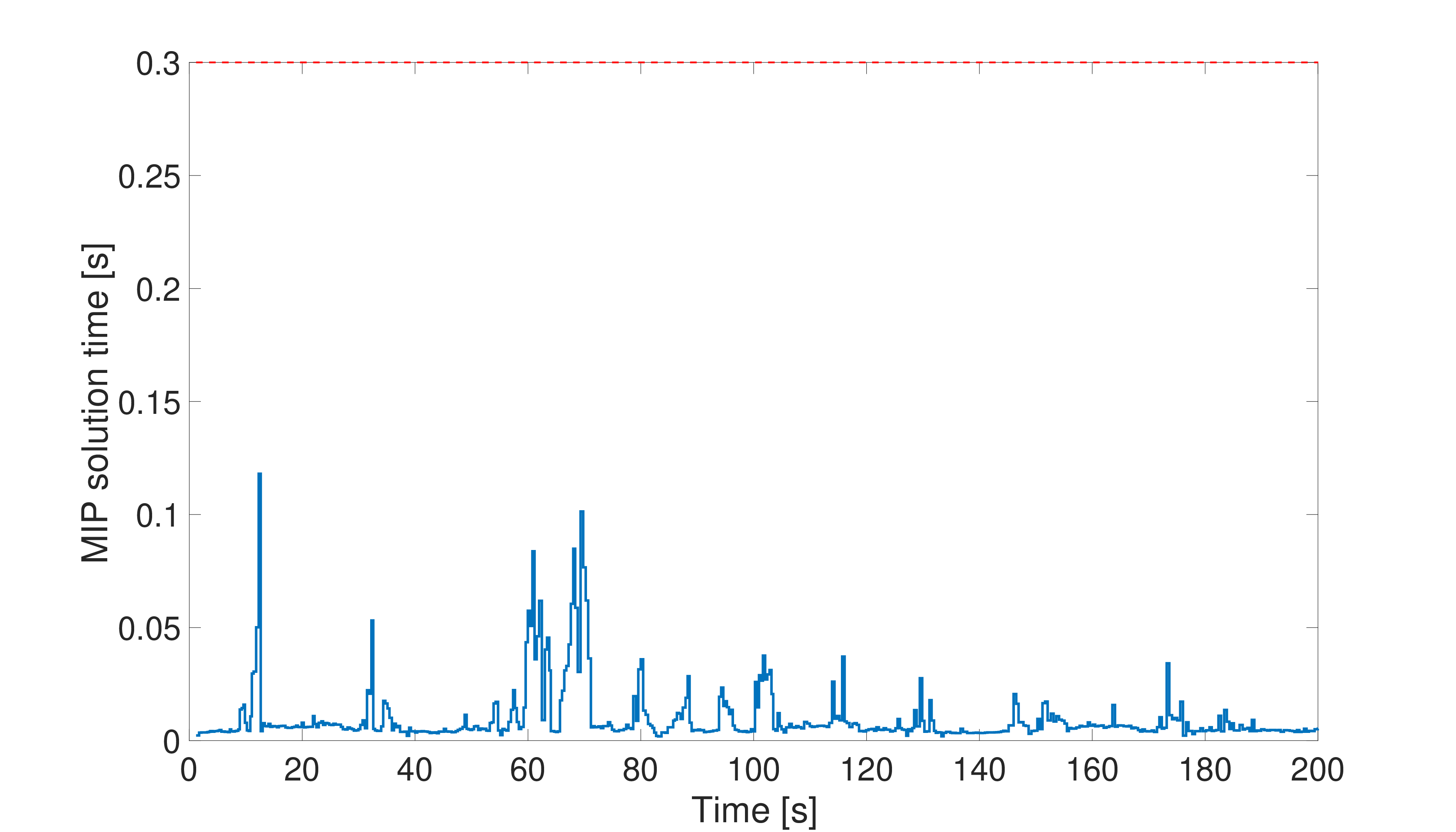}}}		
	\caption{CPU time of \software{BB-ASIPM} solver in MIP-DM (sampling period $\Ts^{\mathrm{mip}} = 0.3$~s) during the experiments in Fig.~\ref{fig:visualization_figs}.}
	\label{fig:CPU_times}
\end{figure}

\section{Conclusions and Outlook} \label{sec:concl}
We designed a mixed-integer programming-based decision making for automated driving. The mixed-integer quadratic programming formulation uses a linear vehicle model in a road-aligned coordinate frame, it includes lane selection and lane change timing constraints, polyhedral collision avoidance and intersection crossing constraints, and zone-dependent traffic rule changes. \change{We leveraged the recently developed} embedded \software{BB-ASIPM} solver, using a branch-and-bound method with reliability branching and warm starting, block-sparse tailored presolve techniques, early termination and infeasibility detection within an active-set interior point method. The performance of the MIP-DM method was demonstrated by simulations in various scenarios including merging points and traffic intersections, and real-time feasibility was demonstrated by hardware-in-the-loop simulations 
on dSPACE Scalexio and MicroAutoBox-III rapid prototyping units. Finally, we presented results from experiments 
on a setup with small-scale vehicles, integrating the MIP-DM with a nonlinear model predictive control for reference tracking.

Future \change{works will focus on} using more advanced behavior prediction models for other vehicles and explicit handling of uncertainty in the modeling and perception of the environment, \change{as well as deployment on full scale vehicles}.

\bibliographystyle{IEEEtran}
\bibliography{references}

\begin{IEEEbiography}[{\includegraphics[width=1in,height=1.25in,clip,keepaspectratio]{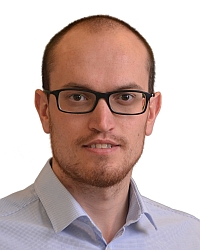}}]{Rien Quirynen} received the Bachelor’s degree in computer science and electrical engineering and the Master’s degree in mathematical engineering from KU Leuven, Belgium. He received a four-year Ph.D. Scholarship from the Research Foundation–Flanders~(FWO) in 2012-2016, and the joint Ph.D. degree from KU Leuven, Belgium and the University of Freiburg, Germany.
Since 2017, he joined Mitsubishi Electric Research Laboratories in Cambridge, MA, USA, where he is currently a senior principal research scientist. His research focuses on numerical optimization algorithms for decision making, motion planning and predictive control of autonomous systems. He has authored/coauthored more than 75 peer-reviewed papers in journals and conference proceedings and 25 patents. Dr. Quirynen serves as an Associate Editor for the Wiley journal of Optimal Control Applications and Methods and for the IEEE CCTA Editorial Board. 
\end{IEEEbiography}

\begin{IEEEbiography}[{\includegraphics[width=1in,height=1.25in,clip,keepaspectratio,trim={18cm 18cm 18cm 18cm},clip]{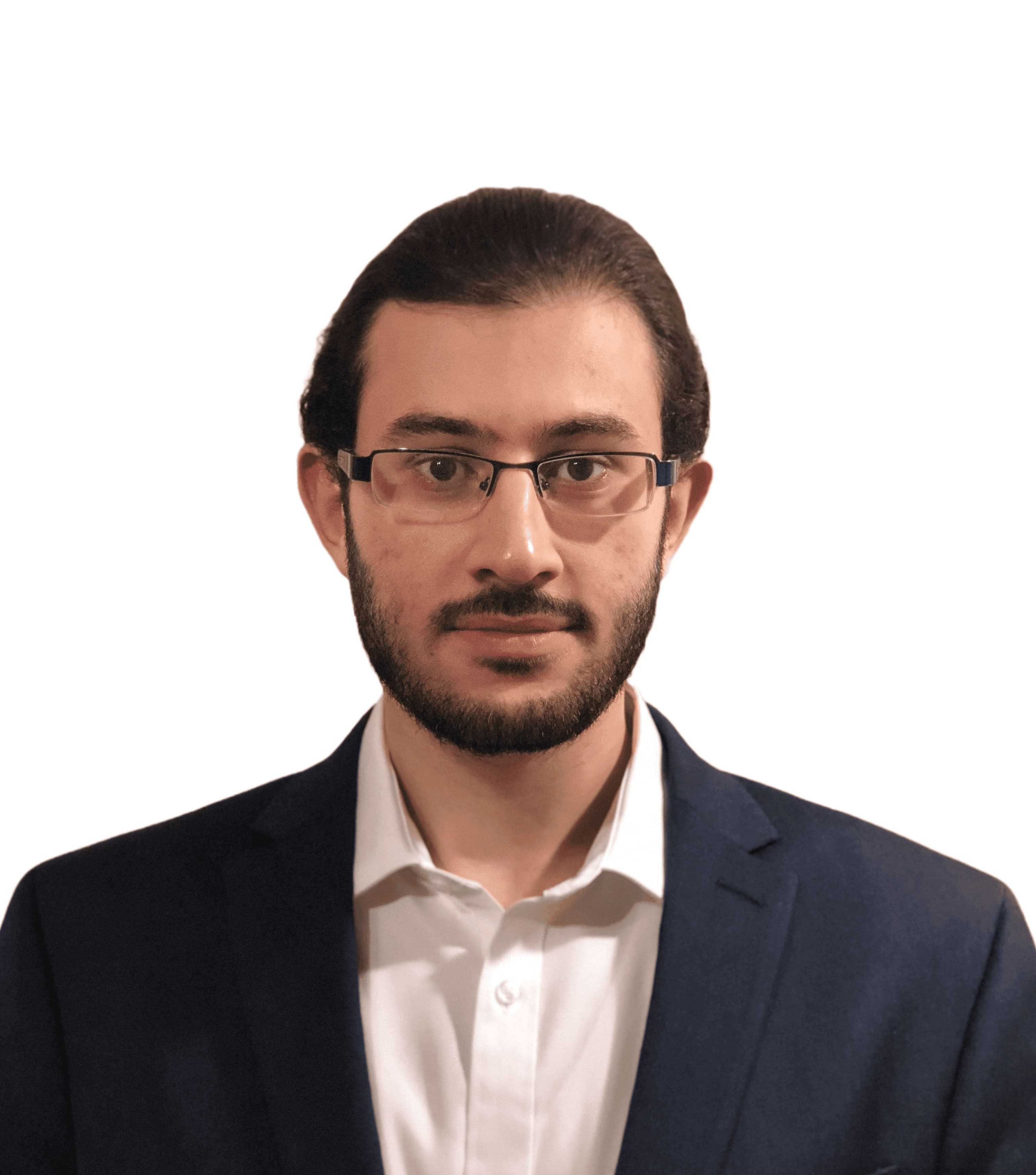}}]{Sleiman Safaoui} received the B.S. and M.S. degrees in electrical engineering from the University of Texas at Dallas, Richardson, TX, USA, in 2019 and 2023, respectively. He is currently pursing his Ph.D. degree in electrical engineering at the University of Texas at Dallas, Richardson, TX, USA as a Research Assistant with the Control, Optimization, and Networks Lab (CONLab). From Aug 2021 to March 2022, he was an intern at Mitsubishi Electric Research Laboratories (MERL) where he worked on ground and aerial vehicle autonomy research projects. His current research interests include risk-based motion planning and control for robotic systems under uncertainty, autonomus vehicles, and multirobot systems.
\end{IEEEbiography}

\begin{IEEEbiography}[{\includegraphics[width=1in,height=1.25in,clip,keepaspectratio]{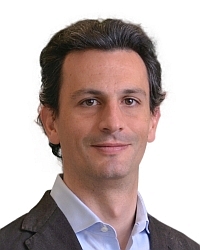}}]{Stefano Di Cairano} %
	received the Master’s (Laurea) and the Ph.D. degrees in information engineering in 2004 and 2008, respectively, from the University of Siena, Italy. 
	During 2008-2011, he was with Powertrain Control R\&A, Ford Research and Advanced Engineering, Dearborn, MI, USA. Since 2011, he is with Mitsubishi Electric Research Laboratories, Cambridge, MA, USA, where he is currently a Deputy Director, and a Distinguished Research Scientist. His research focuses on optimization-based control and decision-making strategies for complex mechatronic systems, in automotive, factory automation, transportation systems, and aerospace. His research interests include model predictive control, constrained control, path planning, hybrid systems, optimization, and particle filtering. He has authored/coauthored more than 200 peer-reviewed papers in journals and conference proceedings and 70 patents.
	Dr. Di Cairano was the Chair of the IEEE CSS Technical Committee on Automotive Controls and of the IEEE CSS Standing Committee on Standards. He is the inaugural Chair of the IEEE CCTA Editorial Board and was an Associate Editor of the IEEE \sc{Transactions on Control Systems Technology}. 
\end{IEEEbiography}

\end{document}